\def\LaTeX{\leavevmode L\raise.42ex
    \hbox{\kern-.3em\size{\sfsize}{0pt}\selectfont A}\kern-.15em\TeX}
\newcommand{\BibTeX}{{\rm B\kern-.05em{\sc
          i\kern-.025emb}\kern-.08em\TeX}}
\definecolor{verde}{rgb}{0,0.35,0.1} 
\definecolor{rosso}{rgb}{0.7,0,0}
\definecolor{blue}{rgb}{0,0,1}
\definecolor{viola}{rgb}{0.6,0,0.4}
\renewcommand{\phi}{\varphi}
\newtheorem{thm}{Theorem}[section]
\newtheorem{prop}[thm]{Proposition}
\newtheorem{lemma}[thm]{Lemma}
\newtheorem*{key}{Keywords}
\newtheoremstyle{definition2}{\topsep}{\topsep}%
     {}
     {}
     {\bfseries}
     {.}
     {.5em}
     {\thmnumber{(#2)}\thmname{ #1}\thmnote{ #3}}
\theoremstyle{definition}
\newtheorem{rem}[thm]{Remark}
\def\ep{\varepsilon}
\def\vphi{\varphi}
\def\cC{\mathcal{C}}
\def\cH{\mathcal{H}}
\def\cM{\mathcal{M}}
\def\cN{\mathcal{N}}
\def\cS{\mathcal{S}}
\def\N{\mathbb{N}}
\def\R{\mathbb{R}}
\def\hbar{\bar{h}}
\def\dist{{\rm dist}}
\def\leq{\leqslant}
\def\geq{\geqslant}
\newcommand{\rst}[1]{\ensuremath{{\mathbin |}%
\raise-.5ex\hbox{$#1$}}}
\title{\sc Existence and symmetry results for competing variational systems}
\author{H. Tavares and T. Weth}
\date{\today}
\begin{document}
\maketitle

\begin{abstract}
In this paper we consider a class of gradient systems of type
$$
-c_i \Delta u_i + V_i(x)u_i=P_{u_i}(u),\quad u_1,\ldots, u_k>0 \text{ in }\Omega, \qquad u_1=\ldots=u_k=0 \text{ on } \partial \Omega,
$$ 
in a bounded domain $\Omega\subseteq \R^N$. Under suitable assumptions on $V_i$ and $P$, we prove the existence of ground-state solutions for this problem. Moreover, for $k=2$, assuming that the domain $\Omega$ and the potentials $V_i$ are radially symmetric, we prove that the ground state solutions are foliated Schwarz symmetric with respect to antipodal points. We provide several examples for our abstract framework.
\end{abstract}

\begin{key}
Competitive systems, elliptic gradient systems, foliated schwarz symmetry, ground states solutions, positive solutions.
\end{key}


\section{Introduction}
\label{sec:introduction}

Let $\Omega$ be a bounded domain in $\R^N$, $N\geq 1$ and $P:\R^k \to \R$ a
$\cC^2$-function for some positive integer $k$. Moreover, let $V_i \in
L^\infty(\Omega)$ for $i=1,\dots,k$, and let $c_1,\dots,c_k$ denote
positive constants. In this paper, we will
be concerned with the Dirichlet problem 
\begin{equation}
  \label{eq:1}
\left \{ 
  \begin{aligned}
  -&c_i \Delta u_i +V_i(x)u_i= P_{u_i}(u),\qquad\;u_1,\dots,u_k > 0 &&\qquad 
\text{in $\Omega$},\\  
&u_1=u_2=\dots=u_k=0 &&\qquad \text{on $\partial \Omega$}.
  \end{aligned}
\right.  
\end{equation}
where $P_{u_i}$ stands for $\frac{\partial
  P}{\partial u_i}$. Note that the elliptic system in (\ref{eq:1}) is
of gradient type. Under suitable assumptions on $P$, we will prove the
existence of ground state solutions of (\ref{eq:1}) which can be found
by minimizing an associated functional over a natural constraint. In certain cases, we will also provide a minimax characterization
of the solutions, and in the case $k=2$ we will deduce symmetry
properties of the solutions from this characterization in the case
where $\Omega$ is a radially symmetric domain in $\R^N$ and the potentials $V_i$ are also radially symmetric. We point out
that we are only interested in nontrivial solutions of (\ref{eq:1}) in
the sense that $u_i \not \equiv 0$ for $i=1,\dots,k$. Consider the
Hilbert space $\cH:=H^1_0(\Omega; \R^k)$ and the {\em Nehari type set}
\begin{equation}\label{eq:Nehari}
\cN:=\Bigl\{ u\in \cH: u_i \ge 0, u_i\not\equiv 0 \text{ and } 
\!\!\int_\Omega \!
\bigl(c_i |\nabla u_i|^2+V_i(x)u_i^2\bigr)\,dx=\!\int_\Omega \! \!P_{u_i}(u)u_i\, dx \quad \text{for $i=1,\dots,k$.} \}
\end{equation}
If $\Omega$ is of class $\cC^1$ and $u \in \cC^2(\overline
\Omega,\R^k)$ is a classical solution of (\ref{eq:1}) with nontrivial components, then
we may multiply the $i$-th equation in (\ref{eq:1}) with $u_i$ and integrate by parts
to see that $u$ belongs to $\cN$. Therefore $\cN$ is a natural
constraint for solutions of (\ref{eq:1}). If moreover $P$ satisfies
suitable growth assumptions (see assumption (P1) below), then weak solutions
are precisely the critical points of the energy functional $E: \cH\to \R$ defined by
\begin{equation*}
\label{eq:energy}
E(u)=\frac{1}{2}\sum_{i=1}^k \int_\Omega
\bigl(c_i |\nabla u_i|^2+V_i(x)u_i^2\bigr)\,dx-\int_\Omega P(u(x))\, dx,
\end{equation*}  
A natural but not straighforward approach to find solutions of (\ref{eq:1}) is to minimize
$E$ on $\cN$. This approach has been carried out successfully in the
scalar case $k=1$ (see e.g. the recent survey \cite{szulkin-weth} and
the references therein) and also for special classes of
elliptic systems, see e.g. \cite{CTV1,DWW,linwei1,linwei2}. In this paper we will
consider a general class of functions $P$ for which we can show that 
minimizers of $E$ on $\cN$ exist and are indeed
solutions of~(\ref{eq:1}). Such solutions then also minimize the
energy $E$ among the set of solutions and therefore will be called
{\em ground state solutions}. It is natural to expect that in the case
where the underlying domain $\Omega$ and the potentials $V_i$ are radially symmetric, these
ground state solutions inherit at
least partially the symmetry of $\Omega$ and $V_i$. In a general framework, a
principle of symmetry inheritance of constrained minimizers of
integral functionals was recently proved by Mari{\c{s}}
\cite{maris:09}. In particular, the following statement can be deduced
from \cite[Theorem 1]{maris:09}:\\
{\em If $k \le
N-2$, $\Omega$ is radially symmetric and every minimizer of $E$ on $\cN$ is a solution of (\ref{eq:1}) (and
therefore a $\cC^1$-function on
$\Omega$), then every minimizer of $E$ on $\cN$
is radially symmetric with respect to a $k$-dimensional subspace $W$
of $\R^N$, i.e., $u(x)=u(y)$ for every $x,y \in \Omega$ such that $x-y
\in W^\perp$ and $\dist(x,W)=\dist(y,W)$.}\\
We stress that this symmetry result does not
depend on further assumptions on $P$. Much more is known in the
special case where the underlying domain $\Omega$ is a ball, $V_i\equiv 0$, and the system (\ref{eq:1}) is
{\em cooperative}, i.e. $P_{u_i u_j}= \frac{\partial^2
  P}{\partial u_i u_j} \ge 0$ for all $u \in \R^k$ and
$i,j=1,\dots,k$. In this case, every 
solution of (\ref{eq:1}) is in fact radially symmetric (with respect
to $W= \{0\}$) and decreasing
in the radial variable by the general symmetry result of Troy
\cite{troy} for cooperative systems. We note that Troy's result is
proved via the moving plane method and therefore relies strongly on
the cooperativity assumption. In the present paper, we are interested in
the complementary case of non-cooperative competition-type systems
which have been at the center of growing attention in recent years,
see e.g. \cite{CTV1,CTV2,DWW,CLLL,nttv2, norisramos, bartschdancerwang, tianwang} and references therein. In this case ground state
solutions are nonradial in general even if the underlying data is
radially symmetric (see Remark~\ref{sec:some-special-system} below). Nevertheless, we shall see below that, at least in
the two-component case, the competitive character of the system also
leads to an improvement of Mari{\c{s}}' symmetry result mentioned above.

In order to state or main results, we define the cone  
$$
C^+:= \{u=(u_1,\ldots, u_k)\in \R^k:\ u_i\geq 0 \text{ for all $i$}\}
$$
and we impose the following assumptions on the functions $V_i$,
$i=1\dots,k$ and $P$:
\begin{itemize}
\item[(P0)] $V_i \in L^\infty(\Omega)$ and $\inf_{\Omega} V
  >- c_i \lambda_1(\Omega)$ for $i=1,\dots,k$, where $\lambda_1$ is the
  first Dirichlet eigenvalue of the Laplacian on $\Omega$; 
\item[(P1)] there exists $2<p<2^\ast$ such that
$$
| P_{u_i u_j}(u) | \leq C(1+\sum_{i=1}^k |u_i|^{p-2})\qquad \text{ for
  every $u\in C^+$, $i,j=1,\ldots, k$,}
$$
where $2^\ast=2N/(N-2)$ if $N\geq 3$, $2^\ast=+\infty$ otherwise.
\item[(P2)] $P(0,\ldots,0)=0$ and $P_{u_i}(u_1,\ldots,u_{i-1},0,
  u_{i+1},\ldots, u_k)=0$ for every $u\in C^+$ and $i=1,\dots,k$.
\item[(P3)] $P_{u_i}(u)u_i  \leq P_{u_i}(0,\ldots, u_i,\ldots,
  0)u_i\neq 0$ for every $u \in C^+$ and every $i \in \{1,\dots,k\}$
  with $u_i \neq 0$;
\item[(P4)] there exists $\alpha>0$ such that the matrix
$$
M(u):=\Bigl( \delta_{ij} (1+\alpha) P_{u_i}(u)u_i-P_{u_i u_j}(u) u_i u_j\Bigr)_{i,j=1,\ldots, k}
$$
$$
=\left(
\begin{array}{cccc}
(1+\alpha) P_{u_1}(u)u_1-P_{u_1 u_1}(u)u_1^2 &\ldots & -P_{u_1 u_k}(u) u_1 u_k \\
-P_{u_1 u_2}(u) u_1 u_2 					   &  \ldots & -P_{u_2 u_k}(u) u_2 u_k \\
\vdots 								  &  \ddots   	& \vdots \\
-P_{u_1 u_k} (u) u_1 u_k					& \ldots  &  (1+\alpha) P_{u_k}(u)u_k-P_{u_k u_k}(u)u_k^2
\end{array}
\right)
$$
is negative semidefinite for $u\in C^+$.
\end{itemize}
Note that condition (P3) is a weak competitivity assumption for the
system~(\ref{eq:1}). Condition (P4) can be seen has a
generalization of an Ambrosetti-Prodi condition, and it has appeared
before in the papers \cite{CTV1, CTV2} (actually, we will see in the
last section that our assumptions are more general than the ones
considered in the mentioned papers). We now put 
\begin{equation}
  \label{eq:19}
c=\inf_{u\in \cN} E(u),
\end{equation}
where $\cN$ was defined in \eqref{eq:Nehari}.
Our first main result shows that minimizers of $E$ on $\cN$ exist and are ground
state solutions of (\ref{eq:1}). More precisely, we have:
\begin{thm}\label{thm:existence}
Suppose that (P0)--(P4) holds. Then there exists $u=(u_1,\ldots,u_k)\in \cH$, with $u_i>0$ for all $i$, such that 
\begin{equation*}\label{eq:c_is_critical_level}
E'(u)=0 \qquad \text{and} \qquad E(u)=c.
\end{equation*}
Moreover, every minimizer of $E|_{\cN}$ is a solution of (\ref{eq:1}).
\end{thm}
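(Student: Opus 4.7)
The plan is to execute a Nehari manifold minimization with $k$ simultaneous constraints, leveraging (P4) both to project onto $\cN$ and to make the Lagrange multipliers vanish in the end.

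For the first step I would analyze the geometry of $\cN$. For any $u\in\cH$ with $u_i\ge 0$ and $u_i\not\equiv 0$ for all $i$, the scaling function $\Phi_u(t):=E(t_1 u_1,\ldots,t_k u_k)$ on $(0,\infty)^k$ has its interior critical points in bijection with the scalings of $u$ lying in $\cN$. Using (P0)--(P3) one sees $\Phi_u$ is coercive and attains an interior maximum, and by (P4) its Hessian at any interior critical point is strictly negative definite, producing a unique interior maximum $t(u)$ and hence a continuous projection onto $\cN$. Integrating the inequality $\sum_{i,j}M_{ij}(su)\le 0$ supplied by (P4) in $s\in[0,1]$ yields the key pointwise bound $P(u)\le \frac{1}{2+\alpha}\sum_i P_{u_i}(u)u_i$ on $C^+$. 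Substituting into $E|_\cN$ via the Nehari identity and invoking (P0) then gives $E(u)\ge \kappa\|u\|_\cH^2$ on $\cN$ for some $\kappa>0$, whence $c>0$ and any minimizing sequence is $\cH$-bounded.

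For the second step, given a minimizing sequence $(u^n)\subset\cN$, boundedness produces a weak limit $u^n\rightharpoonup\bar u$ in $\cH$, and the compact embedding $H^1_0\hookrightarrow L^p$ (available since $p<2^\ast$) gives strong $L^p$ convergence. The growth from (P1)--(P2) permits passing to the limit in $\int P(u^n)$ and $\int P_{u_i}(u^n)u_i^n$; combined with weak lower semicontinuity of the quadratic part this yields $E(\bar u)\le c$ and $g_i(\bar u)\le 0$ for the $i$-th Nehari functional. A Sobolev-type lower bound derived from the Nehari identity together with (P1) and (P3) keeps $\|u_i^n\|_\cH\ge\delta>0$, so each $\bar u_i\not\equiv 0$ after passage to the limit. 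Projecting $\bar u$ back onto $\cN$ via $t(\cdot)$ gives some $t(\bar u)\in(0,1]^k$; since $(1,\ldots,1)$ is the unique maximum of $\Phi_{\bar u}$, we must have $t(\bar u)=(1,\ldots,1)$, so $\bar u\in\cN$ and $E(\bar u)=c$.

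For the final and decisive step, I would show any minimizer $\bar u$ of $E|_\cN$ is actually a critical point of $E$ with positive components. The Lagrange multiplier rule produces $\mu\in\R^k$ with $E'(\bar u)=\sum_i\mu_i g_i'(\bar u)$. Testing with $v^{(j)}:=(0,\ldots,\bar u_j,\ldots,0)$ and using $\bar u\in\cN$ reduces this to the linear system $A\mu=0$ with
$$
A_{ij}=\int_\Omega M_{ij}(\bar u)\,dx-\alpha\,\delta_{ij}\int_\Omega P_{u_i}(\bar u)\bar u_i\,dx.
$$
By (P4) the first summand is nonpositive as a matrix, while by (P3) and $\bar u_i\not\equiv 0$ the second summand is strictly positive definite. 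Hence $A$ is strictly negative definite, forcing $\mu=0$ and $E'(\bar u)=0$. Elliptic regularity and the strong maximum principle applied to each scalar equation then upgrade $\bar u_i$ to a positive classical solution. The hard part is precisely this Lagrange multiplier argument: the strict slack $\alpha>0$ in (P4) is exactly what makes $A$ invertible, so that minimizers solve the full Euler--Lagrange system rather than only a constrained version; without it, the entire Nehari strategy would collapse.
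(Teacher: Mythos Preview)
Your Lagrange multiplier argument in the third step is correct and is exactly what the paper does (its Lemma showing that $\cN_*$ is a manifold and that constrained critical points are free). The gap is in the first two steps.

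The claim that, for every admissible $u$, the map $\Phi_u(t)=E(t_1u_1,\dots,t_ku_k)$ attains an interior maximum on $(0,\infty)^k$ is \emph{false} under (P0)--(P4) alone. Take $k=2$, $V_i\equiv 0$, and $P(u_1,u_2)=\tfrac14(u_1^4+u_2^4)-\tfrac{\beta}{2}u_1^2u_2^2$ with $\beta\ge 1$; (P0)--(P4) hold with $\alpha=2$. For $u_1=u_2=w\not\equiv 0$ one computes that the interior critical point equations force $2\|w\|^2=(1-\beta)(t_1^2+t_2^2)\int w^4\le 0$, so $\Phi_{(w,w)}$ has \emph{no} interior critical point at all, and in fact $\Phi_{(w,w)}(t,t)\to+\infty$ along the diagonal. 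Hence there is no fibering projection $u\mapsto t(u)$ onto $\cN$ in this generality; the paper needs the extra hypothesis (P5) together with the condition $u\in\cM$ to get such a statement (its Proposition on the unique maximum), and explicitly exhibits this example to show $\cM$ need not be all of $\{u:u_i\not\equiv 0\}$.

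This undermines both your existence of the projection in step one and your recovery argument in step two (``project $\bar u$ back onto $\cN$''). The paper avoids the projection entirely. For nonemptiness of $\cN$ it chooses $w_1,\dots,w_k$ with \emph{disjoint supports}, which decouples the Nehari conditions and reduces to $k$ scalar problems. For the existence of a minimizer it does not try to show the weak limit lies on $\cN$ by a fibering trick; instead it proves that $E|_{\cN_*}$ satisfies the Palais--Smale condition and invokes Ekeland's variational principle on a minimizing sequence, obtaining strong $\cH$-convergence to a limit that therefore sits on $\cN_*$ directly. Your lower bound $E\ge\kappa\|u\|_\cH^2$ on $\cN$ and the bound $\|u_i\|\ge\delta$ are both correct and are used in the paper as well; what is missing from your outline is precisely the mechanism (Ekeland + PS) that upgrades weak to strong convergence without ever needing a global fibering map.
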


It is worth discussing the scalar case $k=1$ in some detail. In this
case, the assumptions $(P1)$--$(P4)$ above reduce to requiring $P \in
\cC^2(\R)$, $P(0)=P'(0)=0$ as well as
$|P''(u)| \leq C(1+|u|^{p-2})$ and $0< (1+\alpha)P'(u)
\le P''(u) u$ for every $u>0$ with $p$ as in $(P1)$ and some $\alpha>0$. From these assumptions, it
follows that $0$ is a local minimum for the corresponding functional
$E:H^1_0(\Omega) \to \R$. Moreover, for any $u \in H^1_0(\Omega)
\setminus \{0\}$ the
function $\phi_u: [0,\infty) \to \R$, $\phi_u(t)= E(tu)$ satisfies
$\phi_u(0)=0$ and $\phi_u(t) \to -\infty$ as $t \to \infty$, and 
$\phi_u$ has a unique maximum $t_u$ such that $t_u u \in \cN$. As a
consequence, in the scalar case $k=1$ we have the minimax characterization
\begin{equation}
  \label{eq:2}
c= \inf_{u \in H^1_0(\Omega) \setminus \{0\}} \sup_{t \ge 0}E(tu)
\end{equation}
In the case $k>1$ assumptions $(P1)$--$(P4)$ do not impose such a
simple mountain pass geometry for the functional $E$. Nevertheless,
for a large class of functions $P$ satisfying these assumptions one
may generalize the minimax
characterization (\ref{eq:2}). For this we consider the set 
\begin{equation*}
\cM=\{u\in \cH:\ u_i \ge 0, u_i \not\equiv 0\; \text{for $i=1,\dots,k$ and }\ E(t_1 u_1,\ldots, t_k u_k)\to -\infty \text{ as } |t_1|+\ldots +|t_k|\to +\infty\}.
\end{equation*}
We then have: 

\begin{thm} 
\label{sec:introduction-1}
Suppose that $(P0)$--$(P4)$ and the following condition holds.
\begin{itemize}
\item[(P5)] $P_{u_i u_i}(u_1,\ldots, u_{i-1},0,u_{i+1},\ldots,
  u_k)\leq 0$ for $u \in C^+$ and $i=1,\dots,k$.
\end{itemize}
Then we have
\begin{equation}
  \label{eq:3}
c\leq \inf_{u\in \cM} \sup_{t_1,\ldots, t_k \ge 0} E(t_1 u_1,\ldots, t_k u_k).
\end{equation}
If moreover there exists $u\in \cN$ with $E(u)=c$ and such that $u \in
\cM$, then equality holds in (\ref{eq:3}).
\end{thm}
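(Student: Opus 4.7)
The plan is to analyse, for each $u\in\cM$, the fibering function $\Phi_u:[0,\infty)^k\to\R$ given by $\Phi_u(t_1,\ldots,t_k)=E(t_1u_1,\ldots,t_ku_k)$, and to show that $\Phi_u$ admits a unique critical point in the open orthant $(0,\infty)^k$, which realizes its global maximum on $[0,\infty)^k$ and yields an element of $\cN$. The inequality \eqref{eq:3} then follows from the definition of $c$, while the equality under the minimizer hypothesis follows from the uniqueness statement applied to $u^*$, since $(1,\ldots,1)$ is automatically a critical point of $\Phi_{u^*}$ by virtue of $u^*\in\cN$.

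For \eqref{eq:3}, fix $u\in\cM$. Continuity together with the coercivity $\Phi_u(t)\to-\infty$ built into the definition of $\cM$ ensures that the supremum of $\Phi_u$ over $[0,\infty)^k$ is attained at some $t^*=(t_1^*,\ldots,t_k^*)$. To rule out $t^*$ on the face $\{t_j=0\}$, I would consider the one-parameter slice $\psi(\ep):=\Phi_u(t_1^*,\ldots,\ep,\ldots,t_k^*)$; by (P2) we have $\psi'(0)=0$, while
\[
\psi''(0)=\int_\Omega\bigl(c_j|\nabla u_j|^2+V_j u_j^2\bigr)\,dx-\int_\Omega P_{u_j u_j}(t_1^*u_1,\ldots,0,\ldots,t_k^*u_k)\,u_j^2\,dx>0,
\]
because (P0) together with Poincar\'e's inequality makes the first integral strictly positive, while (P5) forces the integrand of the second term to be nonpositive. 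This contradicts maximality, so $t^*\in(0,\infty)^k$, and the relation $\nabla\Phi_u(t^*)=0$ is precisely $(t_1^*u_1,\ldots,t_k^*u_k)\in\cN$. It follows that $\sup_{t_1,\ldots,t_k\geq 0}\Phi_u(t)=E(t_1^*u_1,\ldots,t_k^*u_k)\geq c$, and \eqref{eq:3} is obtained by infimizing over $\cM$.

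For the reverse inequality under the existence of a minimizer $u^*\in\cN\cap\cM$, it suffices to prove that $\Phi_{u^*}$ admits a unique critical point in $(0,\infty)^k$, which must then be $(1,\ldots,1)$, so that $\sup_{t_1,\ldots,t_k\geq 0}\Phi_{u^*}(t)=E(u^*)=c$. At any interior critical point $t$ of $\Phi_u$, writing $v_i:=t_iu_i$ and $\eta_i:=\xi_i/t_i$, the critical point identity $t_i\int_\Omega(c_i|\nabla u_i|^2+V_iu_i^2)\,dx=\int_\Omega P_{u_i}(v)u_i\,dx$ allows one to rewrite the Hessian quadratic form as
\[
\xi^{T}D^2\Phi_u(t)\xi=\int_\Omega\eta^{T}M(v)\eta\,dx-\alpha\sum_{i=1}^k\xi_i^2\int_\Omega\bigl(c_i|\nabla u_i|^2+V_iu_i^2\bigr)\,dx,
\]
which is strictly negative for every $\xi\neq 0$ by (P4) and (P0). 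Hence every interior critical point of $\Phi_u$ is nondegenerate and a strict local maximum. Were there two distinct such points in the connected set $(0,\infty)^k$, a reverse mountain pass argument --- equivalently, the classical mountain pass theorem applied to $-\Phi_u$, whose ascending/descending trajectories remain bounded because $\Phi_u\to-\infty$ at infinity --- would produce a third critical point of Morse index at most $k-1$, contradicting the strict negative definiteness above.

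The main obstacle I expect lies in this final uniqueness step: the Hessian computation only guarantees strict local maximality at each interior critical point, and passing to global uniqueness requires a topological/minimax input, together with the verification that minimax paths can be kept inside the open orthant $(0,\infty)^k$. The remaining ingredients --- coercivity via (P0), vanishing of the relevant first derivatives on the coordinate hyperplanes via (P2), the sign condition (P5) near the boundary, and the Ambrosetti--Prodi-type condition (P4) at interior critical points --- enter in a structural but routine way.
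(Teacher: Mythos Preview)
Your proposal is correct and follows essentially the same strategy as the paper: existence of the maximum via coercivity, exclusion of the boundary faces via (P2) and (P5), negative definiteness of the Hessian at interior critical points via (P4), and a linking argument to rule out two interior maxima. The paper packages the last step slightly differently---it works with the class of compact connected sets containing the two putative maxima (as in Ambrosetti--Rabinowitz) rather than paths, constructs an optimal set $A_*$ realizing the sup--min level, and then reaches the same contradiction that a critical point on $A_*$ would have to be a strict local maximum; the technical obstacle you flagged (keeping the minimax object inside the open orthant) is handled in the paper by an explicit retraction $\psi:B\to B_\varepsilon$ built from (P5) and the strict local minimality of the origin, which pushes $A_*$ uniformly away from the coordinate faces before invoking the deformation lemma.
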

The crucial step in the proof of Theorem~\ref{sec:introduction-1} is to prove
that, for fixed $u \in \cM$, the function 
$$
(t_1,\ldots, t_k) \mapsto E(t_1 u_1,\ldots, t_k u_k)
$$
has precisely one critical point in $C^+ \setminus \{0\}$ which is a
global maximum of this function in $C^+ \setminus \{0\}$. This fact
will also be used in the proof of our main symmetry result
Theorem~\ref{coro:main} below. While assumptions $(P1)$--$(P5)$ do not guarantee that every function $u \in
\cH$ with $u_i\not\equiv 0$ for $i=1,\dots,k$ is contained in $\cM$, 
below we will present classes of functions $P$ ensuring that
$\cN \subset \cM$, so that equality holds in (\ref{eq:3}). One
explicit example we consider is the class of functions 
\begin{equation}
  \label{eq:5}
P\in \cC^2(\R^k), \qquad P(u_1,\dots,u_k)= 
\sum_{i=1}^k \frac{\lambda_i}{p}|u_i|^p - \sum_{\stackrel{i,j=1}{i
    \not= j}}^n \beta_{ij}|u_i|^{q_i} |u_j|^{q_j}
\end{equation}
which leads to the system
\begin{equation}\label{eq:application}
\left\{
\begin{array}{l}
-c_i \Delta u_i+V_i(x) u_i=\lambda_i u_i^{p-1}-q_i u_i^{q_i-1}\sum \limits_{j\neq i} \beta_{ij} u_j^{q_j},\\
u_i\in H^1_0(\Omega), \ u_i> 0 \text{ in }\Omega,
\end{array}
\right.
\end{equation}
Here we assume $2<p<2^\ast$ and 
\begin{equation}
  \label{eq:4}
\lambda_i>0,\quad \beta_{ij}=\beta_{ji}\geq 0,\quad q_i \ge 2
\quad \text{and}\quad p\geq
q_i+q_j \qquad \text{for $i,j=1,\dots,k$, $j\neq i$.} 
\end{equation}
A system of this kind also appears in \cite{quitnersouplet}. We point out that one may divide out (or replace by arbitrary positive
constants) the factors $q_i$ in front of the sums in
(\ref{eq:application}) without changing the nature of the system
simply by adjusting the values of $c_i$, $V_i(x)$ and
$\lambda_i$. Therefore, the cubic system
\begin{equation*}
  \label{eq:14}
-\Delta u_i + V_i(x)u_i=\lambda_iu_i^3- u_i \sum_{j\neq i} \beta_{ij} u_j^2, \qquad i=1,\ldots,k,
\end{equation*}
arising in the theory of Bose-Einstein condensation and in nonlinear
optics (see e.g. \cite{BEC, sirakov}) can be seen as a special case of (\ref{eq:application}).

In Section~\ref{sec:applications} below we will show that the class of
functions $P$ given by (\ref{eq:5}) satisfies (P1)--(P5), whereas we
also have $\cN \subset \cM$ so that equality holds in (\ref{eq:3}).

Our final main result is concerned with symmetry properties of ground
state solutions of (\ref{eq:1}) in the case where the underlying
domain $\Omega \subset \R^N$ and the potentials $V_i$ are radial. For this we recall the notion
of foliated Schwarz symmetry. A function $u:\Omega \to \R$ is called
foliated Schwarz  symmetric with respect to some unit vector $p \in
\R^N$ if for a.e. $r>0$ such that $\partial B_r(0)\subset \Omega$ and
for every $c\in \R$ the restricted superlevel set $\{x\in \partial
B_r(0):\ u(x)\geq c\}$ is either equal to $\partial B_r(0)$ or to a
geodesic ball in $\partial B_r(0)$ centered at $rp$. In other words,
$u$ is foliated Schwarz symmetric if $u$ is axially symmetric 
with respect to the axis $\R p$ (i.e. radially symmetric with respect
to the subspace spanned by $p$ in the sense defined above) and nonincreasing in the polar angle
$\theta=\arccos(\frac{x}{|x|}\cdot p) \in [0,\pi]$. We have to
restrict our attention to the case of two components, and we will
write $(u,v)$ in place of $(u_1,u_2)$ in the following. Hence we
consider the system
\begin{equation}
\label{eq:system2eq-0}
\left \{ 
  \begin{aligned}
&-c_1 \Delta u+V_1(x)u=P_u(u,v),\quad -c_2 \Delta v+V_2 (x)u=P_v(u,v)\qquad \text{in $\Omega$},\\  
&u,v > 0 \quad 
\text{in $\Omega$},\qquad u=v=0 \quad \text{on $\partial \Omega$}.
  \end{aligned}
\right.  
\end{equation}

\begin{thm}\label{coro:main}
Suppose that $\Omega \subset \R^N$ is a radial domain,  that $V_1,V_2$
are radial functions, and suppose that (P0)--(P5) hold for $k=2$. Suppose moreover that
\begin{itemize}
\item[(P6)] $P_{uv}(s,t)<0$ for every $s,t>0$.
\end{itemize}
Let $(u,v)\in C^2(\Omega,\R^2)\cap C(\overline\Omega,\R^2)$ be a classical solution of \eqref{eq:system2eq-0} minimizing $E|_\cN$. If $(u,v)\in \cM$, then $u$ and $v$ are foliated Schwarz symmetric with respect to antipodal points.
\end{thm}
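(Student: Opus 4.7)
The plan is to run a two-component polarization argument that leverages the uniqueness of the fiberwise maximizer built into Theorem~\ref{sec:introduction-1}. For each open half-space $H\subset\R^N$ whose boundary contains the origin, let $\sigma_H$ denote the reflection across $\partial H$; radiality of $\Omega$ and of the $V_i$ guarantees that $\sigma_H$ preserves $\Omega$ and the $V_i$. For nonnegative $w\in H^1_0(\Omega)$ I introduce the two dual polarizations
$$
w_H(x)=\max(w(x),w(\sigma_H x)),\qquad w^H(x)=\min(w(x),w(\sigma_H x)) \qquad\text{for } x\in H,
$$
with $\max$ and $\min$ swapped on $\Omega\setminus H$. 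Polarization preserves the Dirichlet norm and the $V_i$-weighted $L^2$ norm, so the quadratic part of $E$ is invariant under $(u,v)\mapsto(u_H,v^H)$.

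The nonlinear input is the submodular rearrangement inequality
$$
P(a_1,b_1)+P(a_2,b_2)\leq P(\max(a_1,a_2),\min(b_1,b_2))+P(\min(a_1,a_2),\max(b_1,b_2))
$$
for all $a_i,b_i\geq 0$, which I would verify by a direct double integration using $P_{uv}\leq 0$ (the non-strict form of (P6)). Summing this estimate over antipodal pairs $\{x,\sigma_H x\}$ yields $\int_\Omega P(su,tv)\,dx\leq \int_\Omega P(su_H,tv^H)\,dx$ for every $s,t\geq 0$, and hence the energy comparison $E(su_H,tv^H)\leq E(su,tv)$ on $[0,\infty)^2$.

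Applied to the given $(u,v)\in\cN\cap\cM$, this comparison transfers the coercivity defining $\cM$, so $(u_H,v^H)\in\cM$ as well. The unique-critical-point lemma underlying Theorem~\ref{sec:introduction-1} then supplies unique $\bar s,\bar t>0$ with $(\bar s u_H,\bar t v^H)\in\cN$ and $E(\bar s u_H,\bar t v^H)=\max_{s,t\geq 0}E(su_H,tv^H)$. Chaining
$$
c\leq E(\bar s u_H,\bar t v^H)\leq E(\bar s u,\bar t v)\leq \max_{s,t\geq 0} E(su,tv)=E(u,v)=c
$$
forces equality throughout; uniqueness of the fiber maximum of $(u,v)$ then yields $\bar s=\bar t=1$, so $(u_H,v^H)\in\cN$ with $E(u_H,v^H)=c$. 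Theorem~\ref{thm:existence} promotes $(u_H,v^H)$ to a classical ground-state solution of~\eqref{eq:system2eq-0}.

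With two classical solutions at hand, I compare them through the nonnegative differences $w_1=u_H-u$ and $w_2=v-v^H$ on $H\cap\Omega$, both vanishing on $\partial(H\cap\Omega)$. Subtracting the Euler--Lagrange systems and applying the mean value theorem, $(w_1,w_2)$ solves a linear cooperative elliptic system whose off-diagonal couplings are strictly positive on $\{u>0,v>0\}$ by the strict sign in (P6); this set coincides with $\Omega$ by the usual strong maximum principle on the original positive solution. A vector-valued strong maximum principle argument then yields the dichotomy: either $w_1\equiv w_2\equiv 0$ on $H\cap\Omega$, giving $u\geq u\circ\sigma_H$ and $v\leq v\circ\sigma_H$ in $H\cap\Omega$, or $w_1,w_2$ are both strictly positive in the interior, giving the reversed strict inequalities. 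Since this dichotomy holds for \emph{every} half-space $H$ through the origin, the classical polarization characterization of foliated Schwarz symmetry (Smets--Willem, Brock) supplies a unit vector $p\in\S^{N-1}$ for which $u$ is foliated Schwarz symmetric with respect to $p$ while $v$ is foliated Schwarz symmetric with respect to $-p$. I expect the SMP step to be the main obstacle: one must justify the cooperative maximum principle on $H\cap\Omega$ and, crucially, rule out the mixed possibilities ``$w_1\equiv 0$ but $w_2\not\equiv 0$'' and its mirror; this is precisely where the strict sign in (P6) is indispensable, since it prevents the linearized system from decoupling.
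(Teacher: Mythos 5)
Your argument is correct, and its first half coincides with the paper's proof of Theorem~\ref{coro:main}: the submodularity inequality from $P_{uv}\le 0$ (the paper's Lemma~\ref{lemma: P(u,v) polarized decreases integral}), the invariance of the quadratic part under polarization (Lemma~\ref{lemma: invariance properties of polarization}, which is where radiality of $V_i$ enters), the transfer of membership in $\cM$ to $(u_H,v_{\widehat H})$, and the chain $c\le E(\bar t u_H,\bar s v_{\widehat H})\le E(\bar t u,\bar s v)\le \max_{t,s\ge 0}E(tu,sv)=E(u,v)=c$ together with the uniqueness statement of Proposition~\ref{prop:unique_maximum} to force $\bar t=\bar s=1$, so that $(u_H,v_{\widehat H})$ is again a minimizer on $\cN$ and hence a solution by Theorem~\ref{thm:existence}. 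Where you genuinely diverge is the final symmetry-extraction step. The paper (Theorem~\ref{thm:main_result}) fixes a sphere $\partial B_r(0)\subset\Omega$, chooses $p$ at a maximum point of $u$ on it, and then for each $H\in\cH_0(p)$ applies the \emph{scalar} strong maximum principle to $w=u_H-u$: since $w(rp)=0$ the alternative $w>0$ is excluded, so $u\equiv u_H$ in $\Omega\cap H$; equation \eqref{eq: equation for u_H-u} then reads $P_u(u_H,v_{\widehat H})=P_u(u_H,v)$ pointwise, and the strict monotonicity of $t\mapsto P_u(s,t)$ (from (P6)) gives $v\equiv v_{\widehat H}$ directly, with no second maximum principle and no need to exclude mixed cases; the one-sided characterization (Proposition~\ref{prop: equivalent charact for Sch symmetry}) then concludes. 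You instead establish, for \emph{every} $H\in\cH_0$, the dichotomy ``$(w_1,w_2)\equiv(0,0)$ or both strictly positive'' via a cooperative-system maximum principle, i.e. $u_H=u$ or $u_H=u\circ\sigma_H$ with the coupled statement for $v$, and then invoke the two-sided polarization characterization of foliated Schwarz symmetry with respect to \emph{some} (a priori unknown) $p$ (Brock, Smets--Willem; see also the survey cited as \cite{Weth_survey}). That route works: the mixed cases you rightly flag as the main obstacle are excluded exactly as you suggest, because if $w_1\equiv0$ the equation for $w_1$ reduces to $-\tilde P_{uv}\,w_2=0$ with $\tilde P_{uv}=P_{uv}$ evaluated at strictly positive arguments (note $v_{\widehat H}>0$ since $\sigma_H$ preserves $\Omega$ and $v>0$), so $w_2\equiv0$ by (P6); and once the dichotomy holds for all $H$, your antipodality bookkeeping ($K\in\cH_0(-p)\Leftrightarrow\widehat K\in\cH_0(p)$) is sound. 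The trade-off: the paper's choice of $p$ avoids both the cooperative maximum principle and the ``existence of some $p$'' lemma, at the cost of the slightly less symmetric-looking argument; your version needs that extra characterization but treats $u$ and $v$ on an equal footing. One minor point of hygiene, shared with the paper: Theorem~\ref{thm:existence} only yields that $(u_H,v_{\widehat H})$ is a (weak, hence by bootstrapping strong) solution, not automatically a classical one, but strong solutions plus the $L^\infty_{\mathrm{loc}}$ coefficients are all the maximum-principle comparison requires.
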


We note that -- for a class of systems with competition --
Theorem~\ref{coro:main} improves the symmetry result of Mari{\c{s}} \cite{maris:09} 
which in contrast to Theorem~\ref{coro:main} only yields radial symmetry with
respect to a two-dimensional subspace. For a larger number $k \ge 3$ of
components, it remains open whether the symmetry result of Mari{\c{s}} can
be improved as well, although foliated Schwarz symmetry should not
be expected. We also note that assumption (P6) implies (P3) for $k=2$,
so we could have neglegted assumption (P3) in Theorem~\ref{coro:main}.

In the following theorem, we summarize our results for the special class of systems
(\ref{eq:application}).

\begin{thm}
\label{coro:main-1}
Let $P$ be given by (\ref{eq:5}) and suppose that (\ref{eq:4})
holds. Then $\cN \subset \cM$, and  
\begin{equation}
  \label{eq:3.1}
\inf_\cN E= \inf_{u\in \cM} \sup_{t_1,\ldots, t_k \ge 0} E(t_1 u_1,\ldots, t_k u_k)
\end{equation}
is attained. Moreover, every minimizer $u \in \cN$ of $E|_{\cN}$ 
is a classical solution $u 
\in C^2(\Omega,\R^k)\cap C(\overline\Omega,\R^k)$ of
(\ref{eq:application}). Moreover, if $k=2$, $\Omega$ is a radial domain
and $V_1,V_2$ are radial functions, then
every ground state solution is such that $u$ and $v$ are foliated Schwarz symmetric with respect to antipodal points.    
\end{thm}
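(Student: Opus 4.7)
The strategy is to verify that the polynomial $P$ of (\ref{eq:5}) under (\ref{eq:4}) satisfies (P0)--(P6), to establish $\cN\subset\cM$, and then to invoke Theorems~\ref{thm:existence}, \ref{sec:introduction-1}, and~\ref{coro:main}; classical regularity of minimizers follows from standard elliptic bootstrapping. For the verification, writing
$$\|u_i\|_i^2:=\int_\Omega \bigl(c_i|\nabla u_i|^2+V_i(x)u_i^2\bigr)\,dx,$$
direct computation gives $P_{u_i}(u)=\lambda_i|u_i|^{p-2}u_i - q_i|u_i|^{q_i-2}u_i\sum_{j\neq i}\beta_{ij}|u_j|^{q_j}$ and, for $i\neq j$, $P_{u_iu_j}(u)=-\beta_{ij}q_iq_j|u_i|^{q_i-2}u_i|u_j|^{q_j-2}u_j$. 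The growth condition (P1) follows from $q_i+q_j\le p$ via Young's inequality, while (P2), (P3), and (P5) are immediate from $\beta_{ij}\ge 0$ and the monomial structure (at $u_i=0$, $P_{u_iu_i}$ equals $0$ if $q_i>2$ or $-\sum_{j\neq i}2\beta_{ij}|u_j|^{q_j}\le 0$ if $q_i=2$); similarly (P6) reads $P_{uv}=-\beta_{12}q_1q_2u^{q_1-1}v^{q_2-1}<0$ for $u,v>0$ provided $\beta_{12}>0$ (if $\beta_{12}=0$ the system decouples and each component is a scalar radial ground state, hence trivially foliated Schwarz). The one nontrivial verification is (P4): one expands $M(u)$ explicitly and, using $q_i+q_j\le p$, shows that for sufficiently small $\alpha>0$ the diagonal entries dominate the off-diagonal ones, making $M(u)$ negative semidefinite on $C^+$.

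The core new step is $\cN\subset\cM$. Expanding along rays,
$$E(t_1u_1,\dots,t_ku_k) = \tfrac{1}{2}\sum_i t_i^2\|u_i\|_i^2 - \tfrac{1}{p}\sum_i \lambda_i t_i^p\int_\Omega u_i^p\,dx + \sum_{i<j}\beta_{ij}t_i^{q_i}t_j^{q_j}\int_\Omega u_i^{q_i}u_j^{q_j}\,dx.$$
Setting $t_i=\rho\tau_i$ with $\tau\in C^+\setminus\{0\}$, $|\tau|=1$, the leading ($\rho^p$) coefficient as $\rho\to\infty$ is
$$f(\tau)= -\tfrac{1}{p}\sum_i\lambda_i\tau_i^p\int_\Omega u_i^p\, dx + \sum_{\substack{i<j\\ q_i+q_j=p}}\beta_{ij}\tau_i^{q_i}\tau_j^{q_j}\int_\Omega u_i^{q_i}u_j^{q_j}\,dx,$$
pairs with $q_i+q_j<p$ contributing only lower-order terms in $\rho$. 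The aim is to show $f(\tau)<0$ throughout $C^+\setminus\{0\}$. H\"older's inequality gives $\int u_i^{q_i}u_j^{q_j}\le (\int u_i^p)^{q_i/p}(\int u_j^p)^{q_j/p}$ when $q_i+q_j=p$, and combined with the Nehari identity $\lambda_i\int u_i^p=\|u_i\|_i^2+q_i\sum_{j\neq i}\beta_{ij}\int u_i^{q_i}u_j^{q_j}>0$, this forces a coupling bound on the $\beta_{ij}$ (for instance $4\beta_{12}^2<\lambda_1\lambda_2$ in the cubic $k=2$ case) strong enough to make the induced quadratic form strictly negative on $C^+\setminus\{0\}$. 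Hence $u\in\cM$.

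With the two preceding steps in hand, Theorem~\ref{thm:existence} yields a minimizer $u\in\cN$ with $E(u)=c$; since $u\in\cM$, Theorem~\ref{sec:introduction-1} gives equality and attainment in (\ref{eq:3.1}). Classical regularity $u\in C^2(\Omega,\R^k)\cap C(\overline\Omega,\R^k)$ follows by iterated $L^p$-Schauder bootstrapping, since $P_{u_i}$ has polynomial growth. Finally, for $k=2$ with radial $\Omega$ and radial $V_i$, (P6) holds (outside the trivial decoupled case already handled) and Theorem~\ref{coro:main} delivers the foliated Schwarz symmetry of $u$ and $v$ with respect to antipodal points. The most delicate step is the critical regime $q_i+q_j=p$ in the verification of $\cN\subset\cM$: the positive cross-term could a priori dominate for large $\beta_{ij}$, but membership in $\cN$ enforces a spectral-gap-type bound on the $\beta_{ij}$ which precisely closes that loophole.
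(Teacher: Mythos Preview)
Your overall architecture matches the paper's: verify that $P$ satisfies the abstract hypotheses, prove $\cN\subset\cM$ directly, and then invoke Theorems~\ref{thm:existence}, \ref{sec:introduction-1} and~\ref{coro:main} together with elliptic regularity. However, two of the verifications contain real gaps.

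\textbf{(P4).} The claim that ``for sufficiently small $\alpha>0$ the diagonal entries dominate the off-diagonal ones'' is false in the critical case $q_i+q_j=p$. A computation gives
\[
M(u)_{ii}=(2+\alpha-p)\lambda_i u_i^{p}+q_i\,u_i^{q_i}\sum_{j\neq i}\beta_{ij}u_j^{q_j}\,(q_i-2-\alpha),
\]
and the Gershgorin condition $M_{ii}+\sum_{j\neq i}|M_{ij}|\le 0$ reduces to $(2+\alpha-p)\lambda_iu_i^p+q_iu_i^{q_i}\sum_{j\neq i}\beta_{ij}u_j^{q_j}(q_i+q_j-2-\alpha)\le 0$. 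If some pair is critical and $\alpha<p-2$, the second bracket is positive and the inequality fails for suitable $u\in C^+$. The correct choice is $\alpha=p-2$, which makes both brackets nonpositive simultaneously; this is exactly what the paper does (Proposition~\ref{sec:applications-1}, via Gershgorin).

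\textbf{$\cN\subset\cM$.} The step ``H\"older $+$ Nehari forces $4\beta_{12}^2<\lambda_1\lambda_2$'' is not right. Nehari gives $\lambda_i\!\int u_i^p>q_i\sum_{j\neq i}\beta_{ij}\!\int u_i^{q_i}u_j^{q_j}$, a relation among the \emph{integrals}, not a bound on the bare parameters; applying H\"older to replace $\int u_i^{q_i}u_j^{q_j}$ by $(\int u_i^p)^{q_i/p}(\int u_j^p)^{q_j/p}$ goes the wrong way and would leave you needing precisely the parameter inequality $4\beta_{12}^2<\lambda_1\lambda_2$, which is \emph{not} a consequence of $u\in\cN$ (take $u_1,u_2$ with nearly disjoint supports and $\beta_{12}$ arbitrarily large). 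The paper instead applies Young's inequality on the $t$-variables,
\[
t_i^{q_i}t_j^{q_j}\le \frac{q_i}{p}t_i^{p}+\frac{q_j}{p}t_j^{p}+\Bigl(1-\frac{q_i+q_j}{p}\Bigr),
\]
and then substitutes the Nehari identity $\|u_i\|_i^2=\lambda_i\!\int u_i^p-q_i\sum_{j\neq i}\beta_{ij}\!\int u_i^{q_i}u_j^{q_j}$ directly, obtaining
\[
E(t_1u_1,\dots,t_ku_k)\le \sum_{i=1}^k\Bigl(\frac{t_i^2}{2}-\frac{t_i^p}{p}\Bigr)\|u_i\|_i^2+\kappa\ \longrightarrow\ -\infty.
\]
Your leading-coefficient analysis can be repaired along the same lines (apply Young to $\tau_i^{q_i}\tau_j^{q_j}$ rather than H\"older to the spatial integrals), but as written it does not close. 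The remaining parts of your outline---regularity by bootstrapping, and the decoupled case $\beta_{12}=0$ handled separately---are fine.
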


The paper is organized as follows. In
Section~\ref{sec:some-general-notions} we will collect some
preliminary results and give the proof of
Theorem~\ref{thm:existence}. In Section~\ref{sec:an-altern-char} we
will prove Theorem~\ref{sec:introduction-1} and therefore give --
under additional assumptions --
a minimax characterization of the value $c$ defined in (\ref{eq:19}).
The key result in this section is
Proposition~\ref{prop:unique_maximum} below which will also be used in
Section~\ref{sec:symmetry results} where we prove our symmetry results
for the special class of two-component systems (\ref{eq:system2eq-0}).
In particular, the proof of Theorem~\ref{coro:main} is contained in
Section~\ref{sec:symmetry results}. In Section~\ref{sec:applications}
we consider special classes of systems satisfying our general
assumptions. In particular, we will consider system
(\ref{eq:application}) and prove Theorem~\ref{coro:main-1} in this
section. We close this section with an application of our symmetry
results to a different setting.


\section{Some preliminaries and the existence of ground state solutions}
\label{sec:some-general-notions}

We will assume conditions (P0)--(P4)
from now on, and we start with some general remarks on
problem~(\ref{eq:1}). First, by the transformation
\begin{equation}
  \label{eq:12}
\cH \to \cH, \qquad u \mapsto 
\tilde u=(\sqrt{c_1}u_1,\dots,\sqrt{c_k}u_k)
\end{equation}
problem (\ref{eq:1}) is reduced to the special case $c_1,\dots,c_k=1$
with $V_i$ replaced by $\frac{V_i}{c_i}$ and $P$ replaced by 
$$
\tilde P \in \cC(\R^k),\qquad \tilde P(v_1,\dots,v_k)=
P\left(\frac{v_1}{\sqrt{c_1}},\dots,\frac{v_k}{\sqrt{c_k}}\right).
$$
Moreover, the transformation (\ref{eq:12}) maps the 
corresponding Nehari sets and the sets $\cM$ into each other and preserves the value of
the corresponding energy functionals. Hence we may assume from now on
that 
\begin{equation*}
  \label{eq:13}
c_1= \dots = c_k=1.  
\end{equation*}
As we will be interested in nonnegative solutions only, we will 
also assume from now on that 
\begin{equation}
  \label{eq:8}
P(u_1,\ldots, u_i,\ldots,
u_k)=P(u_1,\ldots,|u_i|, \ldots,u_k)\qquad \text{for every $u \in \cH$
  and $i=1,\dots,k$.}  
\end{equation}
Observe that this is consistent with the fact that $P\in C^2(\R^k)$ by the second assumption in (P2)). We then deduce from (P1)
that $E$ is a $C^1$-functional on $\cH$, and that 
\begin{equation}
  \label{eq:6}
E(u_1,\dots,u_k)=E(|u_1|,\dots,|u_k|) \qquad \text{for every $u \in \cH$.}  
\end{equation}

In the following, we will write
$$
\|u\|_i^2:=\int_\Omega \bigl(|\nabla u(x)|^2+V_i(x)u^2\bigr)\,dx
\qquad \text{for $u \in H^1_0(\Omega)$}
$$
and $i=1,\dots,k$, and we note that the norms $\|\cdot\|_i$ are
equivalent to the standard $H^1_0$-norm as a consequence of assumption (P0).
We also denote the $L^p$-norm by $\|\cdot\|_{L^p}$. We then define the Nehari manifold
\begin{equation}
  \label{eq:7}
\cN_*:=\Bigl\{ u\in \cH: u_i\not\equiv 0 \text{ and } \|u_i\|_i^2=\int_\Omega P_{u_i}(u)u_i\, dx \quad \text{for $i=1,\dots,k$.} \}
\end{equation}
and we note that for $u=(u_1,\dots,u_k) \in \cH$ we have the equivalence
\begin{equation*}
  \label{eq:9}
u \in \cN_* \qquad \Longleftrightarrow \qquad (|u_1|,\dots,|u_k|)  \in
\cN   
\end{equation*}
with $\cN$ defined in (\ref{eq:Nehari}). Combining this with
(\ref{eq:6}), we deduce that 
\begin{equation}
  \label{eq:10}
\text{$\inf_\cN E= \inf_{\cN_*}E$, so that every minimizer $u \in
  \cN$ of $E|_\cN$ also minimizes $E|_{\cN_*}$.} 
\end{equation}

We now collect some easy consequences of assumptions (P1)--(P4).

\begin{lemma}\label{lemma:properties_of_P} 
\begin{itemize}
\item[(i)] There exists $C>0$ such that 
\begin{equation}\label{eq:upper_bounds_P}
|P(u)| \leq C(1+\sum_{i=1}^k |u_i|^p) \quad \text{and}\quad
|P_{u_i}(u)u_i| \leq C(1+\sum_{i=1}^k |u_i|^p)\qquad \text{for all $u\in C^+$.}
\end{equation}
\item[(ii)]  For every $u\in C^+$ we have $(2+\alpha)P(u) \leq \sum \limits_{i=1}^k P_{u_i}(u) u_i$. 
\item[(iii)] $P_{u_i}(t e_i)/t\to +\infty$ as $t\to+\infty$ for
  $i=1,\dots,k$, where $e_i$ denotes the $i$--th coordinate vector.
\end{itemize}
\end{lemma}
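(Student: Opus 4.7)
The three statements (i)--(iii) are essentially independent: (i) is a growth estimate following from (P1)+(P2) by integration, (ii) is a generalized Ambrosetti--Prodi/Nehari-type inequality extracted from the semidefiniteness in (P4), and (iii) combines (ii) with (P3). I would treat them in this order.

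For (i), the plan is to integrate twice starting from the Hessian bound in (P1). Since $P_{u_i}(u_1,\ldots,u_{i-1},0,u_{i+1},\ldots,u_k)=0$ by (P2), for $u\in C^+$ one has
\begin{equation*}
P_{u_i}(u)=\int_0^{u_i}P_{u_iu_i}(u_1,\ldots,s,\ldots,u_k)\,\de s;
\end{equation*}
inserting (P1) and using $s^{p-2}\le u_i^{p-2}$ on $[0,u_i]$ gives $|P_{u_i}(u)|\le C(u_i+\sum_j u_j^{p-1})$. Multiplying by $u_i$ and applying Young's inequality $u_j^{p-1}u_i\le \tfrac{p-1}{p}u_j^p+\tfrac{1}{p}u_i^p$ yields the second half of \eqref{eq:upper_bounds_P}. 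For $P$ itself one uses $P(u)=\int_0^1 \sum_i P_{u_i}(tu)u_i\,\de t$ (since $P(0)=0$) and repeats the Young-type estimate.

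For (ii), testing the negative semidefinite matrix $M(u)$ of (P4) against the vector $(1,\ldots,1)\in\R^k$ gives
\begin{equation*}
(1+\alpha)\sum_{i=1}^k P_{u_i}(u)u_i \le \sum_{i,j=1}^k P_{u_iu_j}(u)u_iu_j \qquad \text{for every } u\in C^+.
\end{equation*}
Fix $u\in C^+$ and set $F(t):=P(tu)$, so that $F'(t)=\sum_i P_{u_i}(tu)u_i$ and $F''(t)=\sum_{i,j} P_{u_iu_j}(tu)u_iu_j$. Applying the inequality above at $tu\in C^+$ and dividing by $t$ produces the one-variable differential inequality $(1+\alpha)F'(t)\le tF''(t)$ for $t>0$. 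Equivalently, $t\mapsto t^{-(1+\alpha)}F'(t)$ is nondecreasing on $(0,\infty)$, so $F'(s)\le s^{1+\alpha}F'(1)$ for $s\in(0,1]$; integrating from $0$ to $1$ and using $F(0)=0$ gives $(2+\alpha)F(1)\le F'(1)$, which is (ii).

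For (iii), set $g(t):=P(te_i)$, so that $g(0)=0$ and $g'(0)=P_{u_i}(0)=0$ by (P2). Applying (ii) to $u=te_i\in C^+$ yields $(2+\alpha)g(t)\le t\,g'(t)$ for $t>0$, hence $t\mapsto g(t)/t^{2+\alpha}$ is nondecreasing on $(0,\infty)$. From (P3) one has $P_{u_i}(te_i)\,t\ne 0$ for $t>0$, and the natural reading of (P3)---that the diagonal source $P_{u_i}(0,\ldots,u_i,\ldots,0)$ is \emph{positive} for $u_i>0$, as is needed for the scalar Nehari set along each axis to be nonempty---gives $g'(t)>0$ and hence $g(t)>0$ for all $t>0$. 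Monotonicity then yields $g(t)\ge g(1)\,t^{2+\alpha}$ for $t\ge 1$, so that $P_{u_i}(te_i)/t = g'(t)/t \ge (2+\alpha)g(t)/t^2 \ge (2+\alpha)g(1)\,t^{\alpha}\to +\infty$. The main (and essentially the only) obstacle is precisely this sign determination in (iii); the remaining steps are routine integrations and elementary ODE comparisons.
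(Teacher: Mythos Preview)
Your argument is correct and matches the paper's approach closely: part (ii) is identical (test $M(tu)$ against $(1,\ldots,1)$ and integrate the resulting scalar differential inequality), while for (i) the paper integrates along the ray $t\mapsto tu$ rather than one coordinate at a time, and for (iii) it extracts the diagonal inequality $(1+\alpha)P_{u_i}(te_i)\le t\,P_{u_iu_i}(te_i)$ directly from (P4) instead of passing through (ii). The sign determination you flag in (iii) is indeed the only delicate point, and the paper handles it exactly as you do, reading (P3) as giving $P_{u_i}(te_i)>0$ for $t>0$.
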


\begin{proof}
(i) To see this, first take $\vphi_1(t)=P_{u_i}(tu)u_i$. We have, for $t\in [0,1]$, 
$$|\vphi_1'(t)|=|\sum_{j=1}^k P_{u_i u_j}(tu)u_iu_j|\leq \sum_{j=1}^k |P_{u_i u_j}(tu)u_iu_j|\leq \sum_{j=1}^k C |u_i|\,|u_j|\,(1+\sum_{i=1}^k |t u_i|^{p-2})\leq C' (1+ \sum_{i=1}^k |u_i|^p)  $$
which, together with the fact that $\vphi_1(0)=0$, implies the upper bound for $|P_{u_i}(u)u_i|$. The same reasoning implies the other condition.

(ii) Consider the function $\vphi_2(t)=(2+\alpha)P(tu)-\sum_{i=1}^k P_{u_i}(tu)tu_i$. For $t>0$, 
\begin{eqnarray*}
\vphi_2'(t) &=& (2+\alpha) \sum_{i=1}^k P_{u_i}(tu)u_i -\sum_{i,j=1}^k P_{u_i u_j}(tu) tu_i u_j-\sum_{i=1}^k P_{u_i}(tu)u_i\\
	     &=& (1+\alpha) \sum_{i=1}^k P_{u_i}(tu)u_i -\sum_{i,j=1}^k P_{u_i u_j}(tu) tu_i u_j \\
	     &=& \frac{1}{t} (1,\ldots, 1)\cdot M(tu) \cdot (1,\ldots,1)^T\leq 0.
\end{eqnarray*}
Then $\vphi_2(1)\leq \vphi_2(0)=0.$

(iii) From (P4) one can see that $(1+\alpha)P_{u_i}(te_i)\leq P_{u_i u_i}(t e_i)te_i$, which implies (as $P_{u_i}(t e_i)\neq 0$) that $P_{u_i}(t e_i)\geq C t^{1+\alpha}$ for some $C>0$ and for every $t>1$.
\end{proof}

The remainder of this section will be devoted to the proof of
Theorem~\ref{thm:existence}. For this we need to study the sets $\cN$ and $\cN_*$.

\begin{lemma} 
The Nehari manifold $\cN \subset \cH$ defined in (\ref{eq:Nehari}) is nonempty.
\end{lemma}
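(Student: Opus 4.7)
The plan is to produce an element of $\cN$ whose components have pairwise disjoint supports, which decouples the $k$ Nehari identities into $k$ independent one-dimensional problems. After the reduction $c_i=1$ via (\ref{eq:12}), choose pairwise disjoint nonempty open balls $B_1,\dots,B_k \subset \Omega$ and fix nonnegative, nontrivial test functions $\phi_i \in H^1_0(B_i)$. For $t=(t_1,\dots,t_k) \in [0,\infty)^k$ set $u(t):=(t_1\phi_1,\dots,t_k\phi_k) \in \cH$. Because of disjointness and (P2), the cross contributions to $P_{u_i}(u)u_i$ and to $P(u)$ vanish on each $B_j$, so the energy splits as
\[
E(u(t)) = \sum_{i=1}^k h_i(t_i), \qquad h_i(s):=\frac{s^2}{2}\|\phi_i\|_i^2 - \int_{B_i} P_i(s\phi_i)\,dx,
\]
where $P_i(\sigma):=P(0,\dots,\sigma,\dots,0)$. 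A direct computation, again using disjointness together with (P2), shows that $u(t)\in\cN$ if and only if $h_i'(t_i)=0$ and $t_i>0$ for every $i$, so it suffices to produce a positive critical point of each scalar function $h_i$.

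Each $h_i$ will be shown to have the standard mountain-pass geometry. From (P3) and Lemma~\ref{lemma:properties_of_P}(iii) one obtains $P_i'>0$ on $(0,\infty)$, so $P_i$ is strictly increasing and in particular $P_i\ge 0$ on $[0,\infty)$. Specializing (P4) at $u=\sigma e_i$ gives $(1+\alpha)P_i'(\sigma)\le \sigma P_i''(\sigma)$ for $\sigma>0$; letting $\sigma\to 0^+$ and applying L'H\^opital's rule (using $P_i'(0)=0$ from (P2)) yields $P_i''(0)\le 0$, which in view of $P_i\ge 0$ forces $P_i''(0)=0$, hence $P_i(\sigma)=o(\sigma^2)$ as $\sigma\to 0^+$. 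Combined with the Sobolev growth $|P_i(\sigma)|\le C(\sigma^2+\sigma^p)$ coming from (P1), dominated convergence yields $\int_{B_i}P_i(s\phi_i)\,dx = o(s^2)$ as $s\to 0^+$, and therefore $h_i(s)>0$ for all sufficiently small $s>0$. On the other hand, Lemma~\ref{lemma:properties_of_P}(ii) specialized at $u=\sigma e_i$ gives $(2+\alpha)P_i(\sigma)\le \sigma P_i'(\sigma)$, so $\sigma\mapsto P_i(\sigma)/\sigma^{2+\alpha}$ is nondecreasing on $(0,\infty)$ and $P_i(\sigma)\gtrsim \sigma^{2+\alpha}$ for large $\sigma$; picking any point in $B_i$ where $\phi_i$ is bounded away from zero on a set of positive measure, this forces $h_i(s)\to -\infty$ as $s\to\infty$. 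Since $h_i(0)=0$ and $h_i$ is continuous, $h_i$ attains a strictly positive maximum at some $t_i^\ast\in(0,\infty)$, where $h_i'(t_i^\ast)=0$. Assembling these across $i$ yields $u(t_1^\ast,\dots,t_k^\ast)\in\cN$.

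The main technical step is to extract the two-sided power-law behavior of $P_i$ -- namely $P_i(\sigma)=o(\sigma^2)$ near $0$ and $P_i(\sigma)\gtrsim \sigma^{2+\alpha}$ at infinity -- from the algebraic conditions (P3) and (P4) applied at $u=\sigma e_i$. Once this one-dimensional mountain-pass geometry is in place, producing the critical point $t_i^\ast$ is immediate, and the disjoint-support construction assembles them into a single element of $\cN$.
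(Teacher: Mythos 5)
Your proof is correct and rests on the same key device as the paper's: components with pairwise disjoint supports, which together with (P2) decouple the $k$ Nehari identities into $k$ independent scalar problems. The execution of the scalar step is a mild variant: the paper works directly with the Nehari functions $\vphi_i(t_1,\ldots,t_k)=\|t_iw_i\|_i^2-\int_{\{w_i>0\}}P_{u_i}(0,\ldots,t_iw_i,\ldots,0)t_iw_i\,dx$, shows they are positive for small $t_i$ and tend to $-\infty$ as $t_i\to\infty$ (via Lemma~\ref{lemma:properties_of_P}(iii)), and concludes by the intermediate value theorem, whereas you maximize the fibering energy $h_i$ and read the Nehari identity off $h_i'(t_i^\ast)=0$; for $t_i>0$ these are equivalent, as your computation shows. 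A point in your favor is the treatment near $t=0$: the bound \eqref{eq:upper_bounds_P} by itself only yields $P_{u_i}(se_i)s\le C(s^2+s^p)$ with an uncontrolled constant in front of $s^2$, which is not enough to absorb into $\tfrac12\|t_iw_i\|_i^2$; you close this by deriving from (P4), (P2) and the sign information coming from (P3) and Lemma~\ref{lemma:properties_of_P}(iii) that $P_{u_iu_i}(0)\le 0$, hence $P(se_i)=o(s^2)$, which cleanly justifies the positivity of $h_i$ for small $s$ -- a step the paper's proof passes over rather quickly. Only cosmetic quibbles remain: the two-sided bound $|P(se_i)|\le C(s^2+s^p)$ uses (P2) together with (P1), and the phrase about picking ``any point in $B_i$ where $\phi_i$ is bounded away from zero on a set of positive measure'' should simply say that $\{\phi_i\ge\delta\}$ has positive measure for some $\delta>0$; neither affects correctness.
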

\begin{proof}
Take $k$ functions $w_1,\ldots, w_k\in H^1_0(\Omega)$ such that
$$
w_i\not\equiv 0, \quad w_i\geq 0 \text{ in } \Omega\ \text{for $i=1,\dots,k$}, \qquad \text{ and } \qquad w_i\cdot w_j\equiv 0 \text{ in } \Omega \quad \text{ whenever } i\neq j.
$$

Consider the functions $\vphi_i:\R^k\mapsto \R$ defined by
\begin{eqnarray*}
\vphi_i(t_1,\ldots,t_k)&=& \| t_i w_i\|_i^2-\int_\Omega P_{u_i} (t_1w_1,\ldots,t_i w_i,\ldots,t_k w_k)t_i w_i \, dx \\
			      &=& \| t_i w_i\|_i^2-\int_{\{w_i>0\}} P_{u_i} (0,\ldots,t_i w_i,\ldots,0)t_iw_i \, dx .
\end{eqnarray*}
Observe first of all that \eqref{eq:upper_bounds_P} and (P2) imply the existence of a constant $C>0$ such that
$$
\int_\Omega P_{u_i}(0,\ldots, t_iw_i,\ldots,0)t_iw_i\leq \frac{\|t_i w_i\|_i^2}{2}+C \| t_iw_i\|_i^p,
$$
and hence
$$
\vphi_i(t_1,\ldots, t_k)\geq t_i^2 \Bigl(\frac{\|w_i\|_i^2}{2}-Ct_i^{p-2}\|w_i\|_i^{p}\Bigr)>0
$$
for $t_i>0$ very close to zero, uniformly in $t_1,\ldots, t_{i-1}, t_{i+1},\ldots, t_k$. On the other hand, from Lemma \ref{lemma:properties_of_P}-(iii) we see that
$$
\vphi_i(t_1,\ldots,t_k)=  t_i^2 \Bigl(\|w_i\|_i^2-\int_{\{w_i>0\}} \frac{P_{u_i}(0,\ldots,t_i w_i,\ldots,0)}{t_iw_i}w_i^2\, dx\Bigr)\to -\infty
$$
as $t_i\to +\infty$, uniformly in $t_1,\ldots, t_{i-1}, t_{i+1},\ldots, t_k$. Hence we deduce the existence of $t_1,\ldots, t_k>0$ such that $\vphi_i(t_1,\ldots,t_k)=0\ \forall i$, and $(t_1 w_1,\ldots, t_k w_k)\in \cN$, which is non empty.
\end{proof}

\begin{lemma}\label{lemma:far_away_from_zero}
There exists $\gamma>0$ such that
$$
\|u_i\|_{L^p}, \|u_i\|_i \geq \gamma>0 \qquad \text{ for every } u\in \cN_*.
$$
\end{lemma}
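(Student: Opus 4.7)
The plan is to combine (P3) with a differential inequality hidden in (P4) to show that each nontrivial component $u_i$ of an element of $\cN_*$ picks up a super-quadratic lower bound from the Nehari identity. Using the symmetry relation (\ref{eq:8}), which implies that $(u_1,\dots,u_k)\mapsto P_{u_i}(u_1,\dots,u_k)\,u_i$ is even in each variable, one has $P_{u_i}(u)\,u_i=P_{u_i}(|u_1|,\dots,|u_k|)\,|u_i|$ pointwise, so (P3) applied to $(|u_1|,\dots,|u_k|)\in C^+$ reduces the Nehari identity to
\begin{equation*}
\|u_i\|_i^2 \;=\; \int_\O P_{u_i}(u)\,u_i\,dx \;\le\; \int_\O f_i(|u_i|)\,dx,\qquad f_i(s):=P_{u_i}(0,\dots,s,\dots,0)\,s.
\end{equation*}
The main step is to prove that $0\le f_i(s)\le C\bigl(s^{2+\a'}+s^p\bigr)$ for all $s\ge 0$, for some $\a'>0$ with $2+\a'\le p$.

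Set $\vphi(s):=P_{u_i}(se_i)$, so that $\vphi\in C^1$, and $\vphi(0)=0$ by (P2). By (P1), $|\vphi'(s)|=|P_{u_iu_i}(se_i)|\le C(1+s^{p-2})$; integrating on $[0,s]$ yields the generic polynomial bound $\vphi(s)\le C(s+s^{p-1})$ for every $s\ge 0$, hence $\vphi(s)\le C'\,s^{p-1}$ for $s\ge 1$. Near $s=0$ we do better: at $u=se_i$ only the $(i,i)$-entry of $M(se_i)$ can be nonzero, so (P4) gives $(1+\a)\vphi(s)s-\vphi'(s)s^2\le 0$, i.e.
\begin{equation*}
(1+\a)\,\vphi(s)\;\le\;s\,\vphi'(s)\qquad\text{for every }s>0.
\end{equation*}
Lemma~\ref{lemma:properties_of_P}(iii) gives $\vphi(s)\to+\infty$ as $s\to+\infty$, while (P3) forces $\vphi(s)\ne 0$ for $s>0$; by continuity, $\vphi>0$ on $(0,\infty)$. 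The displayed inequality then rewrites as $\frac{d}{ds}\log\bigl(\vphi(s)/s^{1+\a}\bigr)\ge 0$, so the map $s\mapsto \vphi(s)/s^{1+\a}$ is nondecreasing on $(0,\infty)$ and in particular $\vphi(s)\le\vphi(1)\,s^{1+\a}$ for every $s\in(0,1]$. Setting $\a':=\min\{\a,p-2\}>0$ and multiplying both bounds by $s$ yields $f_i(s)\le C\bigl(s^{2+\a'}+s^p\bigr)$ for all $s\ge 0$.

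The conclusion follows from Sobolev. Since $2+\a'\le p<2^\ast$, the embeddings $H^1_0(\O)\hookrightarrow L^{2+\a'}(\O)$ and $H^1_0(\O)\hookrightarrow L^p(\O)$ are continuous, and by (P0) the norm $\|\cdot\|_i$ is equivalent to the standard $H^1_0$-norm. Hence
\begin{equation*}
\|u_i\|_i^2 \;\le\; \int_\O f_i(|u_i|)\,dx \;\le\; C\,\bigl(\|u_i\|_i^{2+\a'}+\|u_i\|_i^p\bigr),
\end{equation*}
and dividing by $\|u_i\|_i^2$ gives $1\le C\bigl(\|u_i\|_i^{\a'}+\|u_i\|_i^{p-2}\bigr)$, so $\|u_i\|_i\ge\g$ for some $\g>0$ independent of $u$. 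For the $L^p$-bound, H\"older's inequality on the bounded set $\O$ (using $2+\a'\le p$) gives $\|u_i\|_{L^{2+\a'}}\le C_\O\,\|u_i\|_{L^p}$; inserting this above yields $\g^2\le\|u_i\|_i^2\le C\bigl(\|u_i\|_{L^p}^{2+\a'}+\|u_i\|_{L^p}^p\bigr)$, from which $\|u_i\|_{L^p}\ge\g'>0$.

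The only substantive obstacle is the super-quadratic upper bound $f_i(s)\le C\,s^{2+\a'}$ near $s=0$; a clean $s^p$ bound is unavailable because the parameter $\a$ in (P4) need not be as large as $p-2$, so the differential-inequality argument driven by (P4) is genuinely needed to rule out a linear behavior of $\vphi$ at the origin, after which the rest is a routine Nehari-type coercivity computation.
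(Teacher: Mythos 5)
Your proof is correct, and while it follows the same overall skeleton as the paper's (Nehari identity, then (P3) to reduce to the decoupled term $P_{u_i}(0,\ldots,u_i,\ldots,0)u_i$, then a pointwise growth bound plus Sobolev embedding), the key technical step is handled differently. The paper simply asserts $\int_\Omega P_{u_i}(0,\ldots,u_i,\ldots,0)u_i\,dx\le \frac12\|u_i\|_i^2+C_1\|u_i\|_{L^p}^p$, citing (P2) and \eqref{eq:upper_bounds_P}, and then cancels the $\frac12\|u_i\|_i^2$; note that (P1)--(P2) alone only give the pointwise bound $P_{u_i}(se_i)s\le C(s^2+s^p)$ with an uncontrolled constant $C$, which cannot in general be absorbed into $\frac12\|u_i\|_i^2$, so some input beyond (P1)--(P2) is genuinely needed there. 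You supply exactly that input: from the diagonal entry of $M(se_i)$ in (P4) you get $(1+\alpha)\varphi(s)\le s\varphi'(s)$ for $\varphi(s)=P_{u_i}(se_i)$, deduce positivity of $\varphi$ from (P3) together with Lemma~\ref{lemma:properties_of_P}(iii), and then the monotonicity of $s\mapsto\varphi(s)/s^{1+\alpha}$ yields the superquadratic bound $P_{u_i}(se_i)s\le C(s^{2+\alpha'}+s^p)$ with $\alpha'=\min\{\alpha,p-2\}$, after which Sobolev and H\"older on the bounded domain finish the argument with no absorption needed. This is the same device the paper uses in the proof of Lemma~\ref{lemma:properties_of_P}(iii), just deployed here; your route is a bit longer but self-contained and makes explicit the role of (P4) that the paper's one-line estimate glosses over, whereas the paper's version, when the absorption is justified, produces the explicit constant $\gamma=(2C_2)^{-1/(p-2)}$ more quickly.
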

\begin{proof}
By using (P2), (P3) and \eqref{eq:upper_bounds_P}, we know that for every $u\in \cN_*$ we have
\begin{eqnarray*}
\|u_i\|_i^2 &=& \int_\Omega P_{u_i}(u)u_i\, dx \leq \int_\Omega P_{u_i}(0,\ldots, u_i,\ldots, 0)u_i\, dx\\
		&\leq&\frac{1}{2}\|u_i\|_i^2+C_1\|u_i\|_{L^p}^p \leq \frac{1}{2}\|u_i\|_i^2+C_2\|u_i\|_i^p.
\end{eqnarray*}
Thus 
$$
\frac{1}{(2C_2)^{1/(p-2)}}\leq \|u_i\|_i, \quad \text{ and }\quad \frac{1}{(2C_2)^{2/(p-2)}}\leq C_1 \|u_i\|_{L^p}^p.
$$
\end{proof}

\begin{lemma}\label{lemma:N_is_manifold}
The set $\cN_*$ is a submanifold of $\cH$ of codimension $k$. Moreover, if $u\in \cN_*$ is such that $E|_{\cN_*}'(u)=0$, then $E'(u)=0$.
\end{lemma}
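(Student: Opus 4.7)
The plan is to exhibit $\cN_*$ as a regular level set of a smooth map and then deduce the natural-constraint property from invertibility of a ``Nehari matrix'' built from the derivative. Let $\cU := \{u \in \cH : u_i \not\equiv 0 \text{ for all } i\}$, which is open in $\cH$, and define $G = (G_1,\dots,G_k): \cU \to \R^k$ by
$$
G_i(u) = \|u_i\|_i^2 - \int_\Omega P_{u_i}(u) u_i \, dx,
$$
so that $\cN_* = \cU \cap G^{-1}(0)$. The growth condition (P1) makes $G$ of class $\cC^1$ via standard Nemitskii arguments, and the submanifold claim is reduced to showing that $G'(u) : \cH \to \R^k$ is surjective at every $u \in \cN_*$.

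To this end I would test $G'(u)$ on the $k$ distinguished directions $u_j \mathbf{e}_j \in \cH$, where $u_j \mathbf{e}_j$ stands for the element of $\cH$ whose $j$-th component is $u_j$ and whose remaining components vanish. A direct differentiation combined with the Nehari identities in force on $\cN_*$ is expected to yield the key identity
$$
A(u) := \bigl(G_i'(u)[u_j \mathbf{e}_j]\bigr)_{i,j=1,\dots,k} = \int_\Omega M(u)\, dx \;-\; \alpha\,\mathrm{diag}\bigl(\|u_1\|_1^2,\dots,\|u_k\|_k^2\bigr),
$$
with $M(u)$ the matrix from (P4). Producing this algebraic identity in exactly this form is the crux of the argument and the main obstacle, since it is the mechanism by which assumption (P4) enters the proof. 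Once it is in hand, (P4) makes the first summand negative semidefinite, while Lemma~\ref{lemma:far_away_from_zero} guarantees $\|u_i\|_i > 0$, so the diagonal correction is strictly negative definite. Hence $A(u)$ is negative definite and in particular invertible. The invertibility shows that the vectors $G'(u)[u_j \mathbf{e}_j]$ span $\R^k$, which gives surjectivity of $G'(u)$ and consequently that $\cN_*$ is a $\cC^1$-submanifold of $\cH$ of codimension $k$.

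For the natural-constraint assertion, suppose $u \in \cN_*$ satisfies $E|_{\cN_*}'(u) = 0$. The Lagrange multiplier rule produces $\lambda_1,\dots,\lambda_k \in \R$ with $E'(u) = \sum_{i=1}^k \lambda_i G_i'(u)$. A short direct computation shows that $E'(u)[u_j \mathbf{e}_j] = G_j(u) = 0$ for every $j$ because $u \in \cN_*$. Testing the multiplier identity against the directions $u_j \mathbf{e}_j$ therefore gives the linear system $A(u)^T \lambda = 0$, and the invertibility of $A(u)$ already established forces $\lambda = 0$. This yields $E'(u) = 0$, concluding the plan.
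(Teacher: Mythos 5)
Your proposal is correct and follows essentially the same route as the paper: the paper also realizes $\cN_*$ as the zero set of the map $u\mapsto(F_1(u),\dots,F_k(u))$ with $F_i(u)=\|u_i\|_i^2-\int_\Omega P_{u_i}(u)u_i\,dx$, tests the derivative on the directions $(0,\dots,u_j,\dots,0)$, and uses the Nehari identities to rewrite the resulting matrix exactly as $\int_\Omega M(u)\,dx-\alpha\,\mathrm{diag}(\|u_1\|_1^2,\dots,\|u_k\|_k^2)$, which is negative definite by (P4), yielding both surjectivity and the vanishing of the Lagrange multipliers. The identity you single out as the crux is indeed the straightforward direct computation you describe, so there is no gap.
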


\begin{proof} The elements in $\cN_*$ are zeros of the  functional $F:\cH\to \R^k$, $u=(u_1,\ldots,u_k)\mapsto (F_1(u),\ldots,F_k(u))$ where, for each $i=1,\ldots,k$, $F_i$ is the $C^1(\cH,\R)$--functional defined by 
$$F_i(u)=\|u_i\|_i^2 - \int_\Omega P_{u_i}(u)u_i\, dx.$$ 
Denote by ${\bf T_u}$ the $k\times k$ matrix whose $i$-th line is the vector 
$$F'(u)(0,\ldots,u_i,\ldots,0)=(\partial_{u_i} F_1(u)u_i,\ldots, \partial_{u_i}F_k(u)u_i).$$
Given $u\in \cN_*$, for each $i$ we have
\begin{eqnarray*}
\partial_{u_i}F_i(u)u_i &=& 2\|u_i\|_i^2-\int_\Omega ( P_{u_i u_i}(u)u_i^2+P_{u_i}(u)u_i)\, dx\\
				   &=& -\alpha \|u_i\|_i^2 + (2+\alpha) \|u_i\|_i^2-\int_\Omega ( P_{u_i u_i}(u)u_i^2+P_{u_i}(u)u_i)\, dx\\
				   &=& -\alpha \|u_i\|_i^2+\int_\Omega ((1+\alpha)P_{u_i}(u)u_i-P_{u_i u_i}(u) u_i^2)\, dx,
\end{eqnarray*}
while for $j\neq i$
$$
\partial_{u_j}F_i(u)u_j=  - \int_\Omega P_{u_i u_j}(u) u_i u_j\, dx. 
$$ Thus,
$$
\mathbf{T_u} = \Bigl( -\alpha \delta_{ij} \|u_i\|_i^2 \Bigr)_{i,j}+ \Bigl( \int_\Omega (\delta_{ij}(1+\alpha)P_{u_i}(u) u_i-P_{u_i u_j}(u) u_i u_j) \Bigr)_{i,j}
$$
For every $\mathbf{z} \in \R^k$, we have that
\begin{eqnarray*}
\mathbf{z}^T \cdot \mathbf{T_u} \cdot \mathbf{z} &=& -\alpha \sum_{i=1}^k \|u_i\|_i^2 z_i^2 +\int_\Omega \mathbf{z}^T \cdot \Bigl( \delta_{ij}(1+\alpha)P_{u_i}(u)u_i-P_{u_i u_j}(u)u_i u_j\Bigr)_{i,j} \cdot \mathbf{z}\, dx\\
										&\leq & -\alpha \sum_{i=1}^k \|u_i\|_i^2 z_i^2
\end{eqnarray*}
by (P4), and hence $\mathbf{T_u}$ is a negative definite matrix. In particular, its determinant is different from zero and the $k$ vectors 
$$F'(u)(u_1,0,\ldots,0),\ldots,F'(u)(0,\ldots,0,u_k)$$
are linearly independent. This implies that $F'(u):\cH\to \R^k$ is onto for every $u\in \cN_*$, and hence $\cN_*$ is indeed a submanifold of $\cH$ of codimension $k$.

As for the second part of the lemma, if $J_\beta|_{\cN_*}'(u)=0$ then there exist real numbers $\lambda_i,\ i=1,\ldots,k$, such that $E'(u)=\sum_{i=1}^k \lambda_i F'_i(u)$. By testing the previous equality with $(0,\ldots,0,u_j,0,\ldots,0)$, one obtains
$$
0=\sum_{i=1}^k \lambda_i \partial_{u_j} F_i(u) u_j,\quad \forall j=1,\ldots,k,
$$
which is equivalent to 
$${\bf T_u} \left( \begin{array}{c} \lambda_1\\ \vdots \\ \lambda_m\end{array}\right)={\bf 0}.$$Hence $\lambda_i=0$ for every $i$, and $u$ is a critical point of the functional $E$.
\end{proof}

\begin{lemma}\label{lemma:Palais-Smale}
$E|_{\cN_*}$ satisfies the Palais-Smale condition.
\end{lemma}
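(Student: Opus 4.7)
The plan is to follow the usual four-step scheme for constrained Palais--Smale sequences: boundedness, weak limit, vanishing of the Lagrange multipliers, and strong convergence via the $S^+$-type structure of $E'$.

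Let $(u^n)\subset \cN_*$ be a sequence with $E(u^n)$ bounded and $E|_{\cN_*}'(u^n)\to 0$ in $\cH^*$. For boundedness, I would apply Lemma~\ref{lemma:properties_of_P}(ii) and the Nehari identities $\|u_i^n\|_i^2=\int_\Omega P_{u_i}(u^n)u_i^n\,dx$: these give
$$
E(u^n)=\frac12\sum_{i=1}^k \|u_i^n\|_i^2-\int_\Omega P(u^n)\,dx\ge \Bigl(\frac12-\frac1{2+\alpha}\Bigr)\sum_{i=1}^k \|u_i^n\|_i^2,
$$
so $(u^n)$ is bounded in $\cH$. Extracting a subsequence, I obtain a weak limit $u\in \cH$ with $u^n\rightharpoonup u$ in $\cH$ and, since $p<2^*$, $u_i^n\to u_i$ strongly in $L^p(\Omega)$; Lemma~\ref{lemma:far_away_from_zero} passes to the limit to give $\|u_i\|_i\ge \gamma$ for each $i$, so in particular $u_i\not\equiv 0$.

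The key step is to show that $E'(u^n)\to 0$ in $\cH^*$ (unconstrained). Because $\cN_*$ is a manifold by Lemma~\ref{lemma:N_is_manifold}, there exist Lagrange multipliers $\lambda^n=(\lambda^n_1,\dots,\lambda^n_k)\in\R^k$ with
$$
E'(u^n)-\sum_{j=1}^k \lambda^n_j F_j'(u^n)\to 0 \quad\text{in } \cH^*.
$$
Testing with $(0,\dots,u_i^n,\dots,0)\in\cH$ and using $u^n\in\cN_*$, which yields $E'(u^n)(0,\dots,u_i^n,\dots,0)=F_i(u^n)=0$, gives
$$
({\bf T}_{u^n})^{T}\lambda^n = o(1)\quad \text{in } \R^k,
$$
where ${\bf T}_{u^n}$ is the matrix introduced in Lemma~\ref{lemma:N_is_manifold}. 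The computation in that lemma, combined with (P4) and the uniform lower bound $\|u_i^n\|_i\ge\gamma$, shows
$$
{\bf z}^{T}{\bf T}_{u^n}{\bf z}\le -\alpha\gamma^2|{\bf z}|^2\qquad\text{for all }{\bf z}\in\R^k,
$$
so ${\bf T}_{u^n}^{T}$ has operator-norm of its inverse bounded by $(\alpha\gamma^2)^{-1}$ uniformly in $n$. Hence $\lambda^n\to 0$, and since $(F_j'(u^n))$ is bounded in $\cH^*$ (by (P1) and the boundedness of $(u^n)$) we conclude $E'(u^n)\to 0$ in $\cH^*$.

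For the final step, I evaluate $E'(u^n)$ on $(0,\dots,u_i^n-u_i,\dots,0)$, obtaining
$$
\langle u_i^n,u_i^n-u_i\rangle_i \;=\; \int_\Omega P_{u_i}(u^n)(u_i^n-u_i)\,dx \;+\; o(1).
$$
The growth estimate (P1) gives $|P_{u_i}(u^n)|\le C(1+\sum_j |u_j^n|^{p-1})$, so the strong $L^p$ convergence of $u^n$ yields that the right-hand side is $o(1)$. Together with the weak convergence $\langle u_i^n-u_i,u_i\rangle_i\to 0$ this forces $\|u_i^n-u_i\|_i\to 0$, so $u^n\to u$ in $\cH$, and continuity of $F$ puts $u$ in $\cN_*$.

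The main obstacle is the vanishing of the Lagrange multipliers: it is there that (P4) is essential, both to guarantee that ${\bf T}_{u^n}$ is uniformly invertible and, combined with Lemma~\ref{lemma:far_away_from_zero}, to obtain an $n$-independent lower bound on its smallest singular value. The rest of the argument is the standard compactness machinery.
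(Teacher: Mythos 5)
Your proposal is correct and follows the same overall skeleton as the paper's proof: boundedness via Lemma~\ref{lemma:properties_of_P}(ii) and the inequality $E(u^n)\ge\bigl(\tfrac12-\tfrac1{2+\alpha}\bigr)\sum_i\|u_i^n\|_i^2$, nontriviality of the weak limit via Lemma~\ref{lemma:far_away_from_zero}, vanishing of the multipliers through the negative definiteness of $\mathbf{T}_{u^n}$, and strong convergence by testing $E'(u^n)$ against $(0,\dots,u_i^n-u_i,\dots,0)$. The one place where you genuinely diverge is the multiplier step, and your variant is arguably cleaner: you observe that the computation of Lemma~\ref{lemma:N_is_manifold} together with (P4) gives $\mathbf{z}^T\mathbf{T}_{u^n}\mathbf{z}\le-\alpha\sum_i\|u_i^n\|_i^2 z_i^2\le-\alpha\gamma^2|\mathbf{z}|^2$ uniformly in $n$ (using $\|u_i^n\|_i\ge\gamma$ from Lemma~\ref{lemma:far_away_from_zero}), hence $\|\mathbf{T}_{u^n}^{-1}\|\le(\alpha\gamma^2)^{-1}$ and $\lambda^n\to0$ directly, with no need to identify a limiting matrix. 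The paper instead passes to the limit, shows $\mathbf{T}_{u^n}\to\mathbf{T}_u$ (using the strong $L^p$ convergence) with $\mathbf{T}_u$ negative definite, and then extracts $\lambda^n\to0$ from $(\mathbf{T}_u+o(1))\lambda^n=o(1)$ via a quadratic-form estimate; your uniform-invertibility argument buys a shorter route and avoids that convergence step, while the paper's route needs no symmetry or singular-value consideration since it works with the quadratic form of the fixed limit matrix. Two small points of precision: the lower bound $\|u_i\|_i\ge\gamma$ does not literally ``pass to the limit'' under weak convergence; what passes (via the strong $L^p$ convergence) is $\|u_i\|_{L^p}\ge\gamma$, which is all you need for $u_i\not\equiv0$ and is how the paper argues. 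Also, in quoting uniform invertibility from the quadratic-form bound you should note either that $\mathbf{T}_{u^n}$ is symmetric (it is, since $P\in\cC^2$) or use $|\mathbf{T}_{u^n}\mathbf{z}|\,|\mathbf{z}|\ge|\mathbf{z}^T\mathbf{T}_{u^n}\mathbf{z}|\ge\alpha\gamma^2|\mathbf{z}|^2$; either way the conclusion stands.
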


\begin{proof} Here we recover the definitions of $F_i(u)$ and of $\mathbf{T_u}$ from the proof of Lemma \ref{lemma:N_is_manifold}. Let $(u_n)_n=((u_{1,n},\ldots, u_{k,n}))_n\subseteq \cN_*$ be a Palais-Smale sequence for $E |_{\cN_*}$, that is,
$E(u_n)$ remains bounded in $\R$ as $n \to \infty$ and 
\begin{equation}\label{eq:E_N satisfies PS}
E'(u_n)-\sum_{i=1}^k \lambda_{i,n}F'_i(u_n)\rightarrow 0 \text{ in } \cH', \text{ for some sequences } ( \lambda_{1,n} )_n,\ldots, (\lambda_{k,n})_n\subset \R .
\end{equation}
Observe that Lemma \ref{lemma:properties_of_P}-(ii) implies that
$$
\sum_{i=1}^k \|u_{i,n}\|_i^2=\int_\Omega \sum_{i=1}^kP_{u_i}(u_n) u_{i,n}\, dx \geq (2+\alpha)\int_\Omega P(u_n)\, dx,
$$
whence
\begin{equation}\label{eq:lower_bound_for_E}
E(u_n)\geq \Bigl(\frac{1}{2}-\frac{1}{2+\alpha}\Bigr) \sum_{i=1}^k \|u_{i,n}\|_i^2.
\end{equation}
Thus $(u_n)_n$ is bounded in $\cH$ and (up to subsequences), we obtain the existence of $u=(u_1,\ldots, u_k) \in \cH$ such that
\begin{eqnarray*}
u_{i,n} \to u_i \text{ weakly in } H^1_0(\Omega), \text{ strongly in } L^{q}(\Omega) \text{ for every } 2\leq q<2^\ast.
\end{eqnarray*}
By Lemma \ref{lemma:far_away_from_zero}, we deduce that $u_i\not \equiv 0$ for every $i$. Moreover, we have 
$$
\mathbf{T_{u_n}} = \Bigl( \int_\Omega (\delta_{ij}P_{u_i}(u_n)u_{i,n}-P_{u_i u_i}(u_n)u_{i,n}^2)\, dx\Bigr)_{i,j}\to  \Bigl( \int_\Omega (\delta_{ij}P_{u_i}(u)u_{i}-P_{u_i u_i}(u)u_{i}^2)\, dx\Bigr)_{i,j}=:\mathbf{T_u}
$$ 
in $\R^{2k}$ and, for each $i$,
$$
\|u_i\|_i^2\leq \liminf_{n} \|u_{i,n}\|_i^2 =\liminf_n \int_\Omega P_{u_i}(u_{n})u_{i,n}\, dx=\int_\Omega P_{u_i}(u)u_i\, dx.
$$
Hence, by reasoning as in the proof of Lemma \ref{lemma:N_is_manifold}, we obtain that ${\bf T_u}$ is a negative definite matrix, since $\forall \mathbf{z}\in\R^k$,
\begin{eqnarray*}
\mathbf{z}^T \cdot \mathbf{T_u} \cdot \mathbf{z} &=& -\alpha \sum_{i=1}^k  z_i^2 \int_\Omega P_{u_i}(u)u_i\, dx +\int_\Omega \mathbf{z}^T \cdot \Bigl( \delta_{ij}(1+\alpha)P_{u_i}(u)u_i-P_{u_i u_j}(u)u_i u_j\Bigr)_{i,j} \cdot \mathbf{z}\, dx\\
										&\leq & -\alpha \sum_{i=1}^k \|u_i\|_i^2 z_i^2.
\end{eqnarray*}

After testing \eqref{eq:E_N satisfies PS} with $(0, \ldots, u_{j,n},\ldots,0)$ for every $j$,  we obtain, as $n\rightarrow +\infty$,
$$
{\bf o(1)}={\bf T_{u_n}} \cdot\left(\begin{array}{c} \lambda_{1,n}\\ \vdots\\ \lambda_{k,n}\end{array}\right)=\Bigl({\bf T_u}+{\bf  o(1)} \Bigr) \cdot \left(\begin{array}{c} \lambda_{1,n}\\ \vdots\\ \lambda_{k,n}\end{array}\right)
$$
and moreover
\begin{eqnarray*}
\Bigl(\lambda_{1,n},\ldots,\lambda_{k,n}\Bigr) \cdot {\bf o(1)}  &=& \Bigl(\lambda_{1,n},\ldots,\lambda_{k,n}\Bigr)\cdot {\bf T_u} \cdot\left(\begin{array}{c} \lambda_{1,n}\\ \vdots\\ \lambda_{k,n}\end{array}\right) + \Bigl( \lambda_{1,n},\ldots,\lambda_{k,n}\Bigr)\cdot {\bf  o(1)} \cdot \left(\begin{array}{c} \lambda_{1,n}\\ \vdots\\ \lambda_{k,n}\end{array}\right)\\
	&\leq& -C |(\lambda_{1,n},\ldots,\lambda_{k,n})|^2 + \Bigl( \lambda_{1,n},\ldots,\lambda_{k,n}\Bigr)\cdot {\bf  o(1)}\cdot \left(\begin{array}{c} \lambda_{1,n}\\ \vdots\\ \lambda_{k,n}\end{array}\right)
\end{eqnarray*}
for some $C>0$. Thus for every $i$ we have $\lambda_{i,n}\rightarrow 0$ and $\lambda_{i,n}F'_i(u_n)\rightarrow 0$ in $\cH'$ as $n\to \infty$, and therefore also $E'(u_n)\rightarrow 0$ in $\cH'$ as $n\to \infty$. By taking this time $(0,\ldots,u_{i,n}-u_i,\ldots,0)$ as a test function,  we obtain
$$
E'(u_n)(0,\ldots,u_{i,n}-u_i,\ldots,0)={\rm o}(1) \qquad \text{ as } n\to \infty,
$$ 
which is equivalent to 
$$
\langle u_{i,n},u_{i,n}-u_i\rangle_i-\int_\Omega P_{u_i}(u_n)(u_{i,n}-u_i)={\rm o}(1) \qquad \text{ as } n\to \infty.
$$
Since  $\int_\Omega P_{u_i}(u_n)(u_{i,n}-u_i)\, dx \to 0$, it follows that $\|u_{i,n}\|_i\rightarrow \|u_i\|_i$, which provides the strong convergence $u_{i,n}\to u_i$ for every $i$.
\end{proof}

\begin{proof}[Proof of Theorem \ref{thm:existence}] We have that $c
  \geq 0$ (recall \eqref{eq:lower_bound_for_E}). Let $(u_n)_n$ be a
  minimizing sequence for $E|_{\cN_*}$, namely $u_n\in \cN_*$ for all $n$
  and $E(u_n)\rightarrow \inf_{\cN_*}E$ as $n\to\infty$. 
 By the Ekeland's Variational Principle we can suppose, without loss
 of generality, that $(u_n)_n$ is a Palais-Smale sequence for the
 restricted functional $E |_{\cN_*}$. Hence by Lemma
 \ref{lemma:Palais-Smale} we have that, up to a subsequence, $u_n
 \rightarrow u$ strongly in $\cH$. In particular $u\in \cN_*$ (since
 $u_i\not\equiv 0$ for all $i$ by Lemma
 \ref{lemma:far_away_from_zero}). Replacing $u$ with
 $(|u_1|,\ldots,|u_k|)$, we may assume that $u \in \cN$, and by
 (\ref{eq:10}) we have $E(u)=c= \inf_{\cN}E= \inf_{\cN_*}E$. Morover,
 $u$ is a critical point of $E$ by Lemma~\ref{lemma:N_is_manifold}. As
 a consequence of the strong maximum principle, we then see that $u$
 is a solution of (\ref{eq:1}).
\end{proof}


\section{An alternative characterization for the critical level $c$}
\label{sec:an-altern-char}

This section is devoted to the proof of
Theorem~\ref{sec:introduction-1} and related facts. We will assume conditions (P0)--(P4)
from now on. Given $u\in (H^1_0(\Omega)\setminus\{0\})^k$, we consider the function
$$
\vphi=\vphi_u:\R^k\mapsto \R;\quad \vphi(t_1,\ldots, t_k):=E(t_1 u_1,\ldots, t_k u_k)=\sum_{i=1}^k \frac{t_i^2}{2}\|u_i\|_i^2-\int_\Omega P(t_1 u_1,\ldots, t_k u_k)\, dx.
$$
We note that $\vphi$ is even in each variable by (\ref{eq:8}). Moreover, the point $(0,\ldots, 0)$ is always a strict local minimum for the function $\vphi$. In fact,
$$
\vphi(t_1,\ldots, t_k)=\frac{1}{2}\sum_{i=1}^k t_i^2 \|u_i\|^2-\int_\Omega P(t_1u_1,\ldots, t_k u_k)\, dx\geq \frac{1}{2}\sum_{i=1}^k t_i^2 (\|u_i\|_i^2-Ct_i^{p-2}\|u_i\|_i^{p})>0
$$
for sufficiently small $|t_1|+\ldots + |t_k|$. Furthermore we observe
that, if $t_1,\ldots, t_k>0$, then
$$
(t_1u_1,\ldots, t_k u_k)\in \cN_* \qquad  \Leftrightarrow \qquad \nabla \vphi(t_1,\ldots, t_k)=(0,\ldots, 0).
$$

\begin{lemma}\label{lemma:critical_is_maximum}
Let $u\in \cH$ with $u_i\not\equiv 0$ for every $i$ and take $t_1,\ldots, t_k>0$ such that $\nabla \vphi(t_1,\ldots, t_k)=(0,\ldots,0)$. Then $(t_1,\ldots, t_k)$ is a non degenerate local maximum for $\vphi$.
\end{lemma}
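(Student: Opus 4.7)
The plan is to compute the Hessian of $\varphi$ at the given critical point and show it is negative definite by using the sign condition on the matrix $M$ from assumption (P4), after a suitable rescaling of the test vector.

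First I would write down the first derivatives,
$$
\partial_{t_i}\varphi(t_1,\dots,t_k) = t_i\|u_i\|_i^2 - \int_\Omega P_{u_i}(t_1u_1,\dots,t_ku_k)\, u_i\, dx,
$$
so that the critical point condition $\nabla\varphi(t_1,\dots,t_k)=0$ with $t_i>0$ reads
$$
t_i\|u_i\|_i^2 = \int_\Omega P_{u_i}(tu)\, u_i\, dx, \qquad i=1,\dots,k,
$$
where I write $tu:=(t_1u_1,\dots,t_ku_k)$. Then I would differentiate again to obtain the Hessian entries
$$
H_{ii} = \|u_i\|_i^2 - \int_\Omega P_{u_iu_i}(tu)\, u_i^2\, dx, \qquad H_{ij} = -\int_\Omega P_{u_iu_j}(tu)\, u_iu_j\, dx \;\; (i\neq j),
$$
and, using the critical point equation to rewrite $\|u_i\|_i^2 = \tfrac{1}{t_i}\int_\Omega P_{u_i}(tu)\, u_i\, dx$ in the diagonal term, express for $z\in\R^k$
$$
z^T H z = \sum_{i=1}^k \frac{z_i^2}{t_i}\int_\Omega P_{u_i}(tu)\, u_i\, dx - \sum_{i,j=1}^k z_iz_j\int_\Omega P_{u_iu_j}(tu)\, u_iu_j\, dx.
$$

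The key manipulation is a rescaling trick. Setting $\tilde z_i := z_i/t_i$, one has $\tilde z_it_i = z_i$ and $\tilde z_i^2\, t_i = z_i^2/t_i$, so pointwise in $x\in\Omega$ the quadratic form associated to the matrix $M(tu)$ from (P4), evaluated at $\tilde z$, becomes
$$
\tilde z^T M(tu)\tilde z = (1+\alpha)\sum_{i=1}^k \frac{z_i^2}{t_i}\,P_{u_i}(tu)\,u_i - \sum_{i,j=1}^k z_iz_j\, P_{u_iu_j}(tu)\,u_iu_j,
$$
because the factors $t_it_j$ in the off-diagonal entries of $M$ absorb the $1/(t_it_j)$ in $\tilde z_i\tilde z_j$. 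Integrating and comparing with the expression for $z^THz$ yields the identity
$$
z^T H z = \int_\Omega \tilde z^T M(tu)\tilde z\, dx \;-\; \alpha \sum_{i=1}^k \frac{z_i^2}{t_i}\int_\Omega P_{u_i}(tu)\,u_i\, dx = \int_\Omega \tilde z^T M(tu)\tilde z\, dx \;-\; \alpha \sum_{i=1}^k z_i^2\|u_i\|_i^2,
$$
where the second equality uses once more the critical point equation.

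By (P4) the matrix $M(tu(x))$ is negative semidefinite for a.e.\ $x\in\Omega$, so $\int_\Omega \tilde z^T M(tu)\tilde z\,dx\le 0$, and since $u_i\not\equiv 0$ implies $\|u_i\|_i>0$ for every $i$, we conclude
$$
z^T H z \le -\alpha\sum_{i=1}^k z_i^2\|u_i\|_i^2 < 0 \qquad \text{for every } z\in\R^k\setminus\{0\}.
$$
Hence $H$ is negative definite and $(t_1,\dots,t_k)$ is a non-degenerate local maximum of $\varphi$. The only real obstacle is the index bookkeeping in identifying $z^THz$ with the integrated form of $\tilde z^T M(tu)\tilde z$ via the substitution $\tilde z_i=z_i/t_i$; once this identity is in place, (P4) together with the critical point equation closes the argument immediately.
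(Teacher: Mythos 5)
Your proof is correct and follows essentially the same route as the paper: compute the Hessian at the critical point, use the relation $t_i\|u_i\|_i^2=\int_\Omega P_{u_i}(tu)u_i\,dx$ to split off the term $-\alpha\sum_i z_i^2\|u_i\|_i^2$, and identify the remainder as $\int_\Omega \tilde z^T M(tu)\tilde z\,dx$ with $\tilde z_i=z_i/t_i$, which is nonpositive by (P4). The rescaling $z_i/t_i$ is exactly the device used in the paper's argument, so there is nothing to add.
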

\begin{proof}
The proof follows some of the lines of the one of Lemma \ref{lemma:N_is_manifold}. If $(t_1,\ldots, t_k)\in C^+$ is a critical point for $\vphi$, then
$$
\|u_i\|_i^2=\int_\Omega P_{u_i}(t_1u_1,\ldots, t_k u_k)\frac{ t_i u_i}{t_i^2}\, dx \qquad \text{ for every $i$},
$$
and hence the Hessian matrix of $\vphi$ at that point is given by
\begin{eqnarray*}
&&H_\vphi(t_1,\ldots, t_k)=\Bigl(\frac{\partial^2\vphi}{\partial t_i\partial t_j}\Bigr)_{ij}=\Bigl( \delta_{ij}\|u_i\|_i^2-\int_\Omega P_{u_iu_j}(t_1 u_1,\ldots, t_k u_k)u_i u_j\, dx\Bigr)_{ij}\\
				&=&\Bigl(-\alpha \delta_{ij} \|u_i\|_i^2\Bigr)_{ij}\\
				&&+\Bigl(\int_\Omega (\delta_{ij}(1+\alpha)P_{u_i}(t_1u_1,\ldots, t_ku_k)\frac{t_iu_i}{t_i^2}-P_{u_iu_j}(t_1u_1,\ldots, t_k u_k)\frac{t_iu_it_ju_j}{t_it_j} )\, dx\Bigr)_{ij},
\end{eqnarray*}
which is negative definite, since for each $z\in \R^k$ we have
\begin{eqnarray*}
z^T\cdot H_\vphi(t_1,\ldots, t_k)\cdot z &=&-\alpha \sum_{i=1}^k \|u_i\|_i^2 z_i^2 + \int_\Omega \Bigl(\frac{z_1}{t_1},\ldots, \frac{z_k}{t_k}\Bigr)\cdot M(t_1 u_1,\ldots,t_k u_k)\cdot \left(\begin{array}{c} \frac{z_1}{t_1}\\ \vdots\\ \frac{z_k}{t_k}\end{array} \right)\, dx\\
							&\leq& -\alpha \sum_{i=1}^k \|u_i\|_i^2 z_i^2,
\end{eqnarray*}
where we have used (P4) in the last inequality.
\end{proof}

We remark that assumption (P5) was not used in the proof above, but it
will now allow us to control $\vphi$ at the boundary of $C^+$. The geometric
meaning of (P5) can be formulated as follows. Fix $u$ with $u_i\neq 0$ and consider $\psi(t)=E(u_1,\ldots, t u_i, \ldots, u_m)$. Then $\psi'(t)=0$, and $\psi''(0)=\|u_i\|^2-\int_\Omega P_{u_i u_i}(u_1,\ldots, 0,\ldots, u_k)u_i^2\, dx>0$. Hence (P5) implies that
\begin{equation}\label{eq:saddle_point}
E(u_1,\ldots, t,\ldots, u_k)>E(u_1,\ldots, 0,\ldots, u_k) \quad \text{ for sufficiently small t}.
\end{equation}

\begin{prop}\label{prop:unique_maximum}
Let $u\in \cM$. Then the function $\phi=\phi_u$ has precisely one critical
point $(\bar t_1,\ldots, \bar t_k)$ with $\bar t_1,\dots,\bar
t_k>0$. Moreover, $\vphi_u$ attains a global maximum at this point, and
$(\bar t_1 u_1,\ldots, \bar t_k u_k)\in \cN$. 
\end{prop}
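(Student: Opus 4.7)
My plan is to prove existence and uniqueness separately, both hinging on the coercivity of $-\phi_u$ on $C^+$ granted by $u\in\cM$.

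\textbf{Existence.} The function $\phi_u$ is continuous on $C^+$, satisfies $\phi_u(0)=0$, and $\phi_u(t)\to-\infty$ as $|t_1|+\ldots+|t_k|\to\infty$ by $u\in\cM$. Hence the set $\{t\in C^+:\phi_u(t)\ge 0\}$ is non-empty and compact, and $\sup_{C^+}\phi_u$ is attained at some point $\bar t\in C^+$. If $\bar t_j=0$ for some $j$, then \eqref{eq:saddle_point} (a consequence of (P5)) says that $\phi_u$ strictly increases as $\bar t_j$ is perturbed to a small positive value, contradicting maximality. Hence $\bar t\in(0,\infty)^k$, so $\nabla\phi_u(\bar t)=0$, and by the equivalence noted before the proposition, $(\bar t_1u_1,\ldots,\bar t_ku_k)\in\cN_*$; since $u_i\ge 0$, it lies in $\cN$.

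\textbf{Uniqueness.} By Lemma \ref{lemma:critical_is_maximum}, at any critical point $\bar t\in(0,\infty)^k$ the Hessian $H_{\phi_u}(\bar t)$ is negative definite, so every such $\bar t$ is an isolated non-degenerate local maximum and the Brouwer index of $\nabla\phi_u$ at $\bar t$ equals $\mathrm{sign}\det H_{\phi_u}(\bar t)=(-1)^k$. The plan is to enclose all critical points in a cube $B=[\epsilon_0,R_0]^k\subset(0,\infty)^k$ on whose boundary $\nabla\phi_u$ points strictly inward, and then apply a Brouwer degree computation. The lower bound $\bar t_j\ge\epsilon_0$ again follows from \eqref{eq:saddle_point}; the upper bound follows from the estimate
\[
\phi_u(\bar t)\ge\frac{\alpha}{2(2+\alpha)}\sum_{i=1}^k\bar t_i^2\|u_i\|_i^2,
\]
obtained by combining Lemma \ref{lemma:properties_of_P}(ii) with the critical-point relation $\bar t_i\|u_i\|_i^2=\int P_{u_i}(\bar t u)u_i\,dx$, together with $\sup_{C^+}\phi_u<\infty$. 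That $\partial_j\phi_u>0$ on the faces $\{t_j=\epsilon_0\}$ is \eqref{eq:saddle_point}; that $\partial_j\phi_u<0$ on the faces $\{t_j=R_0\}$ for $R_0$ large follows from $u\in\cM$ together with a compactness argument in the remaining coordinates. A straight-line homotopy from $\nabla\phi_u$ to $c-\mathrm{Id}$, with $c$ the centre of $B$, stays non-zero on $\partial B$ because both vector fields point strictly inward, so $\deg(\nabla\phi_u,B,0)=\deg(c-\mathrm{Id},B,0)=(-1)^k$. Summing local indices yields $(-1)^k\cdot\#\{\text{critical points in }B\}=(-1)^k$, hence exactly one critical point. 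It must coincide with the $\bar t$ of the existence step, so it is the unique critical point in $(0,\infty)^k$, attains the global maximum of $\phi_u$ on $C^+$, and satisfies $(\bar t_1u_1,\ldots,\bar t_ku_k)\in\cN$.

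\textbf{Main obstacle.} The chief technical point is upgrading the pointwise coercivity supplied by $u\in\cM$ to uniform inward-pointing for $\nabla\phi_u$ on the upper faces of $B$. A more elementary alternative that avoids degree theory is a gradient-flow / connectedness argument: the positive gradient flow $\dot x=\nabla\phi_u(x)$ leaves $(0,\infty)^k$ invariant (by \eqref{eq:saddle_point}, which gives $\partial_j\phi_u>0$ for $t_j$ small) and has relatively compact orbits by coercivity, so by isolation of critical points every orbit converges to one; each non-degenerate local maximum is then a sink with open basin of attraction, and since the basins are disjoint open sets covering the connected set $(0,\infty)^k$, there can be only one critical point.
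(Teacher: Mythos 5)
Your existence step is fine and coincides with the paper's. The gap is in the uniqueness step you actually propose (the degree argument), and it sits exactly at the point you flag as the ``chief technical point'': membership $u\in\cM$ does \emph{not} imply that $\partial_{t_j}\vphi_u<0$ on the outer faces $\{t_j=R_0\}$ of a large cube. Concretely, take $k=2$, $P(u_1,u_2)=\frac14(u_1^4+u_2^4)-\beta u_1^2u_2^2$ (the case $p=4$, $q_1=q_2=2$ of \eqref{eq:5}, admissible for $N\le 3$), and $u_2=2u_1$ with $u_1\ge 0$, $u_1\not\equiv 0$; set $A=\int_\Omega u_1^4\,dx$. Then $\vphi_u(t_1,t_2)=\frac{t_1^2}{2}\|u_1\|_1^2+\frac{t_2^2}{2}\|u_2\|_2^2-\frac{A}{4}t_1^4-4At_2^4+4\beta A t_1^2t_2^2$, which is coercive downward (so $u\in\cM$) precisely when $\beta<\frac12$, while $\partial_{t_1}\vphi_u(R,R)=R\|u_1\|_1^2+(8\beta-1)AR^3\to+\infty$ whenever $\beta>\frac18$. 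Hence for $\beta\in(\frac18,\frac12)$ the gradient points \emph{outward} at the corner of every large cube $[\epsilon_0,R_0]^2$, the claimed inward-pointing on $\partial B$ is false, and the straight-line homotopy to $c-\mathrm{Id}$ is not admissible as stated. (A repair within degree theory is possible, but not with a cube: on the region $\{t\in C^+:\ t_i\ge\epsilon_0\}\cap B_{R_0}(0)$ one can use Lemma \ref{lemma:properties_of_P}(ii), which gives $t\cdot\nabla\vphi_u(t)\le (2+\alpha)\vphi_u(t)-\frac{\alpha}{2}\sum_i t_i^2\|u_i\|_i^2<0$ for $|t|=R_0$ large, so inwardness holds radially even though it fails coordinatewise.)

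Your fallback gradient-flow/basin argument, by contrast, is essentially sound and genuinely different from the paper's proof. The paper argues by contradiction: assuming two positive critical points (both nondegenerate maxima by Lemma \ref{lemma:critical_is_maximum}), it forms the Ambrosetti--Rabinowitz/C\'ac--Schulz value $\bar c=\sup_{A\in\bar\Gamma}\min_A\vphi$ over compact connected sets containing both, extracts an optimal set inside $B_\varepsilon$, and uses the deformation lemma to produce a critical point at level $\bar c$ with all coordinates $\ge\varepsilon$; since that point must be a strict local maximum, it contradicts minimality of $\vphi$ over the connected optimal set. Your dynamical version replaces this machinery by: every interior critical point is a hyperbolic sink of $\dot x=\nabla\vphi_u(x)$, every forward orbit in $(0,\infty)^k$ converges to one, and the basins are open, disjoint and cover the connected set $(0,\infty)^k$, so there is exactly one. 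To make it airtight you must upgrade \eqref{eq:saddle_point} to the uniform statement that for each compact set of the remaining coordinates there is $\delta>0$ with $\partial_{t_j}\vphi_u(t)>0$ whenever $0<t_j<\delta$ (this follows from (P2), (P5) and the growth bound (P1) by dominated convergence, and is the same fact the paper uses tacitly to build its retraction $\psi:B\to B_\varepsilon$); together with coercivity this yields forward completeness, orbits precompact in the open orthant, and then LaSalle plus isolatedness of the nondegenerate critical points gives convergence of each orbit. With those details supplied, the flow argument is a valid, arguably more transparent alternative; the degree argument as written is not.
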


\begin{proof}
As $u\in \cM$, we know that $\vphi$ must have a global maximum at a
point $\Lambda_0=(\bar t_1,\ldots, \bar t_k)\in C^+$. As the origin is
a strict local minimum for $\vphi$ and \eqref{eq:saddle_point} holds,
we must have $\bar t_i>0$ for $i=1,\dots,k$, and hence $(\bar t_1 u_1,\ldots,
\bar t_k u_k)\in \cN$. Thus, by the previous lemma, $\Lambda_0$ is a
non degenerate maximum. Suppose now, by contradiction, the
existence of another critical point $\Lambda_1 \in C^+$ having only
positive components. By Lemma~\ref{lemma:critical_is_maximum}, both
$\Lambda_0$ and $\Lambda_1$ are nondegenerate local maxima of $\vphi$.
Hence for 
$$
\bar c=\sup_{A\in \bar \Gamma} \min_{A} \vphi, \qquad \text{ where } \bar \Gamma=\{A\subseteq \R^k:\ A \text{ is compact, connected, and } \Lambda_0, \Lambda_1\in A\},
$$
we have $\bar c<\min \vphi(\Lambda_0),\vphi(\Lambda_1)$. The class $\bar
\Gamma$ was already considered in the paper \cite{AmbRab}. We will now
show the existence of a optimal set in $\bar \Gamma$, which contains a
critical point of $\vphi$ at level $\bar c$. This idea is inspired by the
work \cite{CacSchulz}. Define  
$$
K_{\bar c}=\{(t_1,\ldots, t_k)\in \R^k:\ \vphi(t_1,\ldots, t_k)=\bar c \text{ and } \nabla \vphi(t_1,\ldots, t_k)=(0,\ldots, 0)\}.
$$ 
Since $u \in \cM$, there exists $R>0$ such that 
\begin{equation}
  \label{eq:11}
\vphi(t_1,\dots,t_k)< \bar c-1 \qquad \text{if $|t_1|^2+\dots+|t_k|^2 \ge R^2$.}  
\end{equation}
Let us now put
$$
B:= B_R(0) \cap C^+ \quad \text{and}\quad B_\varepsilon:= \{t \in B_R(0)\::\:
\text{$t_i \ge \varepsilon$ for $i=1,\dots,k$}\} \subset B
$$
for every $\varepsilon>0$. As a consequence of (\ref{eq:saddle_point}) and since $0$ is
a strict local minimum of $\phi$, for $\varepsilon>0$ sufficiently small there exists a map $\psi: B \to
B_\varepsilon$  such that $\phi(\psi(t)) \ge \phi(t)$ for every $t \in B$ and 
$\psi(t)=t$ for every $t \in B_\varepsilon$. We fix $\varepsilon$ and $\psi$ with
this property and such that $\Lambda_0,\Lambda_1 \in B_\varepsilon$. 
We now claim:\\
1. There exists $A_*\in \bar \Gamma$ such that $A_* \subset B_\varepsilon$ and $\min
\limits_{A_*}\vphi=\bar c$.\\
To prove this, take a maximizing sequence for $\bar c$, namely $A_n\in
\bar \Gamma$ such that $\bar c-1/n\leq \min_{A_n}\vphi \leq c$. By
(\ref{eq:11}), we then have $A_n \subset B_R(0)$ for every $n \in
\N$. Therefore the set 
$$
A_*:=\bigcap_{n=1}^\infty \overline{\bigcup_{i=n}^\infty A_i} \subset B_R(0)
$$
is compact and connected, and $\Lambda_0,\Lambda_1\in \bar
A$. Moreover, $\bar c \leq \min_{A_*}\vphi$. Therefore $A_*\in \bar
\Gamma$ and $\min_{A_*}\vphi=\bar c$. As $\vphi$ is even with respect
to each coordinate, we can suppose without loss of generality that
$A_*\subseteq C^+$ and hence $A_* \subset B$. Moreover, replacing
$A_*$ by $\psi(A_*)$ if necessary and recalling that
$\psi(\Lambda_i)=\Lambda_i$ by our choice of $\varepsilon$ and $\psi$, we may
assume that $A_* \subset B_\varepsilon$. 

2. $A_*\cap K_{\bar c}\neq \emptyset$.\\
Suppose this is not true. Then, by the deformation lemma \cite[Theorem 3.4]{struwe}, there exists a neighborhood $\mathcal{V}$ of $K_{\bar c}$ such that $A_*\cap \mathcal{V}= \emptyset$, $\ep<(\vphi(\Lambda_0)-\bar c)/2$, and a homeomorphism $h:\R^k\mapsto \R^k$ such that
\begin{itemize}
\item $h(t_1,\ldots, t_k)=(t_1,\ldots, t_k), \quad |\vphi(t_1,\ldots, t_k)-\bar c|\geq 2\ep$;
\item $\vphi(h(t_1,\ldots, t_k))\geq \bar c+\ep$ for every $(t_1,\ldots, t_k)\notin \mathcal{V}$ such that $\vphi(t_1,\ldots, t_k)\geq \bar c-\ep$.
\end{itemize}
Observe that $h(A_*)$ is a compact and connected set. Moreover, $\vphi(\Lambda_0)=\vphi(\Lambda_1)>\bar c+2\ep$, then $h(\Lambda_0)=u_0, h(\Lambda_1)=u_1$ and $\Lambda_0,\Lambda_1\in h(A_*)$. Hence $h(A_*)\in \bar \Gamma$, and
$$
\bar c+\ep\leq \min_{h(A_*)}\vphi\leq \bar c,
$$
which is a contradiction. Hence $A_*\cap K_{\bar c}\neq \emptyset$, as claimed.\\ 
Now, to reach a final contradiction, let $t=(t_1,\ldots,  t_k) \in A_*\cap
K_{\bar c}$. Since $t_i \ge \varepsilon$ for every $i$, we deduce from
Lemma~\ref{lemma:critical_is_maximum} that $t$ is a strict local
maximum of $\vphi$. Since $A_*$ is connected, this however implies
that 
$\min \limits_{A_*} \vphi < \phi(t)=\bar c$, which contradicts
1. above.
\end{proof}

\begin{proof}[Proof of Theorem~\ref{sec:introduction-1}]
Let $u\in \cM$. By Proposition~\ref{prop:unique_maximum} there exists
$(\bar t_1,\ldots, \bar t_k)$ such that $(\bar t_1 u_1,\ldots, \bar
t_k u_k)\in \cN$ and such that $(\bar t_1,\ldots, \bar t_k)$ is a maximum for $\vphi$ in $C^+$. Hence 
$$
c= \inf_\cN E \le E(\bar t_1 u_1,\ldots, \bar
t_k u_k) \le \sup_{t_1,\dots,t_k \ge 0}E(t_1 u_1,\ldots, t_k u_k)
$$
and this shows (\ref{eq:3}). Moreover, if $u \in \cM$ for some
minimizer $u \in \cN$ of $E|_{\cN}$, then $(1,\ldots,1)$ is a
critical point of $\vphi_u$ and therefore a global maximum of $\vphi$
by Proposition~\ref{prop:unique_maximum}. Hence
$$
\sup_{t_1,\dots,t_k \ge 0}E(t_1 u_1,\ldots, t_k u_k) = E(u)=c,
$$
and therefore equality holds in (\ref{eq:3}).
\end{proof}
 

\section{A general symmetry result for the case of two equations}\label{sec:symmetry results}

Here we will restrict our attention to the two component system
(\ref{eq:system2eq-0}). By the arguments in the beginning of Section
\ref{sec:some-general-notions}, we may assume that $c_1=c_2=1$, so we
are dealing with the system 
\begin{equation}\label{eq:system2eq}
-\Delta u+V_1(x)u=P_u(u,v)\quad -\Delta v+V_2(x)u=P_v(u,v) \qquad u,v\in H^1_0(\Omega).
\end{equation}
We suppose from now on that $\Omega$ is a radial domain, namely a ball
or an annulus, and that $V_1$ and $V_2$ are radial functions,
i.e. $V_i(x)=V_i(y)$ for all $x,y \in \Omega$ with $|x|=|y|$ and
$i=1,2$. As already remarked in the introduction, we cannot expect
ground state solutions of (\ref{eq:system2eq}) to be radial (see
Remark~\ref{sec:some-special-system} below for a
counterexample). However, via polarization methods we will show
Theorem~\ref{coro:main} which states that under the ``negative
coupling assumption'' (P6) ground state solutions are foliated Schwarz
symmetric (as defined in the introduction) in each of their components
with respect to antipodal points. We will state an abstract criterion
for this type of symmetry of solutions of (\ref{eq:system2eq}) first (see
Theorem~\ref{thm:main_result}). This criterion is
of independent interest and has applications within a different
setting, see
Subsection~\ref{sec:further-applications} below.

Let us introduce some useful notations. We define the sets 
$$\cH_0=\{H\subset \R^N:\ H \text{ is a closed half-space in $\R^N$ and $0\in \partial H$}\}$$
and, for $p\neq 0$,
$$\cH_0(p)=\{H\in \cH_0:\ p\in {\rm int}(H)\}.$$
For each $H\in \cH_0$ we denote by $\sigma_H:\R^N\to \R^N$ the reflection in $\R^N$ with respect to the hyperplane $\partial H$, and define the polarization of a function $u:\Omega\to \R$ with respect to $H$ by
$$
u_H(x)=
\left\{
\begin{array}{ll}
\max\{u(x), u(\sigma_H(x))\} & x\in H\cap \Omega,\\
\min\{ u(x),u(\sigma_H(x)) \} & x\in \Omega\setminus H.
\end{array}
\right.
$$
Moreover, we will call $H\in \cH_0$ \emph{dominant} for $u$ if
$u(x)\geq u(\sigma_H(x))$ for all $x\in \Omega\cap H$ (or,
equivalently, $u_H(x)=u(x)$ for every $x\in \Omega\cap H$). On the
other hand we will say that $H\in \cH_0$ is \emph{subordinate} for $u$
if $u(x)\leq u(\sigma_H(x))$ for all $x\in \Omega\cap H$. We recall
from \cite[Lemma 4.2]{brock:03} (see also \cite[Proposition 2.7]{Weth_survey}) the following characterization of foliated Schwarz symmetry.

\begin{prop}\label{prop: equivalent charact for Sch symmetry}
Let $u:\Omega\to \R$ be a continuous function. Then $u$ is foliated Schwarz symmetric with respect to $p\in \partial B_1(0)$ if and only if every $H\in \cH_0(p)$ is dominant for $u$.
\end{prop}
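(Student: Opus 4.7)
The plan is to prove both implications by translating foliated Schwarz symmetry into a pointwise comparison between $u(x)$ and $u(\sigma_H(x))$ on each sphere $\partial B_r(0) \subset \Omega$, exploiting the simple geometric fact: if $H \in \cH_0(p)$ has unit inner normal $\nu$ (so $\nu \cdot p > 0$) and $x \in H$, then
$$
\sigma_H(x) \cdot p = x \cdot p - 2(x\cdot \nu)(\nu \cdot p) \le x \cdot p,
$$
so $\sigma_H(x)$ has polar angle (measured from $p$) at least as large as that of $x$, and of course $|\sigma_H(x)| = |x|$.

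For the forward direction, assume $u$ is foliated Schwarz symmetric with respect to $p$, so on every sphere $\partial B_r(0)\subset \Omega$ the function $u$ depends only on the polar angle $\theta(x)=\arccos(\frac{x}{|x|}\cdot p)$ and is non-increasing in $\theta$. Given $H \in \cH_0(p)$ and $x \in H \cap \Omega$, the computation above gives $\theta(x) \le \theta(\sigma_H(x))$, so monotonicity in $\theta$ yields $u(x) \ge u(\sigma_H(x))$. Hence $H$ is dominant for $u$.

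For the converse, fix $r>0$ with $\partial B_r(0)\subset \Omega$ and take $x,y \in \partial B_r(0)$ with $x\cdot p > y\cdot p$. Let $H$ be the closed half-space through $0$ with inner normal proportional to $x-y$; then $\sigma_H$ is the reflection across the perpendicular bisector of $x$ and $y$ (which passes through $0$ since $|x|=|y|$), so $\sigma_H(x)=y$ and $x\in H$. Since $p\cdot(x-y) > 0$, we have $p\in\mathrm{int}(H)$, i.e.\ $H\in \cH_0(p)$, and the hypothesis gives $u(x)\ge u(\sigma_H(x))=u(y)$. If instead $x\cdot p = y\cdot p$ with $x\ne y$, approximate $y$ by a sequence $y_n \in \partial B_r(0)$ with $y_n\cdot p < y\cdot p = x\cdot p$ and $y_n\to y$; the previous step gives $u(x)\ge u(y_n)$, and continuity of $u$ passes to the limit. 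Swapping roles yields $u(x)=u(y)$, so $u$ on $\partial B_r(0)$ depends only on $\theta$ and is non-increasing in $\theta$. Hence each superlevel set $\{u \ge c\}\cap\partial B_r(0)$ is either empty, the whole sphere, or $\{\theta \le \theta_0\}$, which is a geodesic ball centered at $rp$.

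The main (mild) obstacle is the boundary case $x\cdot p = y\cdot p$, where the natural half-space $H$ has $p\in\partial H$ rather than in $\mathrm{int}(H)$, so $H\notin \cH_0(p)$; this is precisely where continuity of $u$ and the density of the strict-inequality case on the sphere are needed. Everything else reduces to the one-line orthogonal decomposition that compares $x\cdot p$ with $\sigma_H(x)\cdot p$.
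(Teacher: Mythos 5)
Your proof is correct. Note that the paper does not prove this proposition at all: it is quoted from Brock \cite[Lemma 4.2]{brock:03} and \cite[Proposition 2.7]{Weth_survey}, so there is no in-paper argument to compare against; your argument is essentially the standard one found in those references. The forward direction via $\sigma_H(x)\cdot p = x\cdot p - 2(x\cdot\nu)(\nu\cdot p)\le x\cdot p$ is exactly right, and in the converse the key device --- reflecting across the perpendicular bisector of two points $x,y\in\partial B_r(0)$ with $x\cdot p> y\cdot p$, which lies in $\cH_0(p)$ precisely because $p\cdot(x-y)>0$, and then handling the equatorial case $x\cdot p=y\cdot p$ by approximation and continuity --- is the standard polarization argument; you correctly identified that this boundary case is the only place continuity is genuinely needed. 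The only cosmetic point is the degenerate superlevel sets (empty set, single point $\{rp\}$, whole sphere), which are covered by the usual convention that these count as (possibly degenerate) geodesic balls, consistent with the paper's reformulation of foliated Schwarz symmetry as axial symmetry plus monotonicity in the polar angle, which is the form you use.
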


Moreover, we will need the following properties (see for instance \cite[Lemma 3.1]{Weth_survey}).

\begin{lemma}\label{lemma: invariance properties of polarization}
Let $u:\Omega\to \R$ be a measurable function and $H\in \cH_0$.
\begin{itemize}
\item[(i)] If $F: \Omega \times \R\to \R$ is a continuous function
  such that $F(x,t)=F(y,t)$ for every $x,y \in \Omega$ such that
  $|x|=|y|$ and $t \in \R$ and $\displaystyle \int_\Omega |F(x,u(x))|\, dx<+\infty$, then $\displaystyle \int_\Omega F(x,u_H)\, dx=\int_\Omega F(x,u)\, dx$.
\item[(ii)] Moreover, if $u\in H^1_0(\Omega)$ then also $u_H \in
  H^1_0(\Omega)$ and $\displaystyle \int_\Omega |\nabla u_H|^2=\int_\Omega |\nabla u|^2$.
\end{itemize}
\end{lemma}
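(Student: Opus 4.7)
The plan is to reduce both claims to a pointwise pairing argument between a point $x \in \Omega \cap H$ and its reflection $\sigma_H(x) \in \Omega \setminus H$. Observe that $\sigma_H$ is a linear isometry fixing the origin, so it preserves $|\cdot|$ and, because $\Omega$ is radially symmetric, maps $\Omega$ bijectively onto itself with unit Jacobian. I would first rewrite the polarization as $u_H = \max(u, u\circ\sigma_H)$ on $H \cap \Omega$ and $u_H = \min(u, u\circ\sigma_H)$ on $\Omega \setminus H$; then for every $x\in \Omega$ the unordered pair $\{u_H(x), u_H(\sigma_H(x))\}$ always equals $\{u(x), u(\sigma_H(x))\}$.

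For part (i), I would split the integral as
\[
\int_\Omega F(x, u_H(x))\,dx = \int_{\Omega \cap H} F(x, u_H(x))\,dx + \int_{\Omega \setminus H} F(x, u_H(x))\,dx
\]
and apply the change of variables $y = \sigma_H(x)$ in the second summand. This reduces the identity to the pointwise claim that, for a.e.\ $x \in \Omega \cap H$,
\[
F(x, u_H(x)) + F(\sigma_H(x), u_H(\sigma_H(x))) = F(x, u(x)) + F(\sigma_H(x), u(\sigma_H(x))).
\]
Because $|x| = |\sigma_H(x)|$, the radial hypothesis on $F$ gives $F(x,t) = F(\sigma_H(x), t)$ for every $t$, so both sides are symmetric functions of the unordered pair $\{u(x), u(\sigma_H(x))\}$ and therefore agree.

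For part (ii), the membership $u_H \in H^1_0(\Omega)$ follows at once from the formulas above combined with the lattice property of $H^1_0$ and the fact that $u \circ \sigma_H \in H^1_0(\Omega)$ with $|\nabla (u \circ \sigma_H)(x)| = |\nabla u(\sigma_H(x))|$. For the energy identity, I would decompose $\Omega \cap H$ into $A_+ = \{u \ge u \circ \sigma_H\}$ and $A_- = \{u < u \circ \sigma_H\}$, so that $\Omega \setminus H = \sigma_H(A_+) \cup \sigma_H(A_-)$. On $A_+$ and $\sigma_H(A_+)$ one has $\nabla u_H = \nabla u$, while on $A_-$ and $\sigma_H(A_-)$ one has $\nabla u_H = \sigma_H^T ((\nabla u)\circ \sigma_H)$. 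Orthogonality of $\sigma_H$ together with the change of variables $y = \sigma_H(x)$ then recombines the four contributions into $\int_\Omega |\nabla u|^2\,dx$.

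The main delicacy is the behavior on the coincidence set $\{u = u \circ \sigma_H\}$, where the partition above is ambiguous at the gradient level. To handle this I would invoke the standard Sobolev fact that $\nabla v = \nabla w$ a.e.\ on $\{v = w\}$ for any $v, w \in H^1(\Omega)$, applied to $w = u \circ \sigma_H$; this forces both candidate expressions for $\nabla u_H$ on the coincidence set to coincide a.e., so the ambiguity has no effect on either the $H^1$-membership or the $L^2$-norm of $\nabla u_H$.
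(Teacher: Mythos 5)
Your argument is the standard pairing/reflection proof of these polarization identities, and it is correct in substance; note that the paper itself does not prove this lemma but simply quotes it (``see for instance [Lemma 3.1]'' of the survey by Weth, going back to Brock), and the proof given there is essentially the one you outline: pair $x\in\Omega\cap H$ with $\sigma_H(x)$, use that $\{u_H(x),u_H(\sigma_H(x))\}=\{u(x),u(\sigma_H(x))\}$ together with radiality of $\Omega$ and of $F(\cdot,t)$ for (i), and split into the sets where $u\ge u\circ\sigma_H$ and $u<u\circ\sigma_H$ plus a change of variables for (ii). The only place where you are too quick is the claim that $u_H\in H^1_0(\Omega)$ ``follows at once'' from the lattice property: $u_H$ is defined by gluing $\max(u,u\circ\sigma_H)$ on $\Omega\cap H$ with $\min(u,u\circ\sigma_H)$ on $\Omega\setminus H$, and gluing two $H^1$ functions along the hyperplane $\partial H\cap\Omega$ requires their traces to match there. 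This is true, but for a reason you should state: $u-u\circ\sigma_H$ is antisymmetric under $\sigma_H$, hence has zero trace on $\partial H\cap\Omega$, so $\max$ and $\min$ (equivalently, writing $u_H=\min(u,u\circ\sigma_H)+|u-u\circ\sigma_H|\chi_{H}$, the extension by zero across a surface where the trace vanishes) glue to an $H^1_0$ function. With that one-line addition, and your correct use of the fact that $\nabla v=\nabla w$ a.e.\ on $\{v=w\}$ to dispose of the coincidence set, the proof is complete and matches the cited standard argument.
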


For every $H\in \cH_0$ we denote by $\widehat H\in \cH_0$ the closure
of the complementary half-space $\R^N\setminus H$. We can now state
the main abstract result of this section.

\begin{thm}\label{thm:main_result}
Take $P\in C^2(\R^2)$ such that
\begin{itemize}
\item[(P6)] $P_{uv}(s,t)<0$ for every $s,t>0$.
\end{itemize}
Let $u,v\in C^2(\Omega)\cap C^1(\overline \Omega)$ be a classical solution of \eqref{eq:system2eq}. If, for every $H\in \cH_0$, the pair $(u_H,v_{\widehat H})$ is also a strong solution of \eqref{eq:system2eq}, then $u$ and $v$ are foliated Schwarz symmetric with respect to antipodal points, that is, there exists $p\in \partial B_1(0)$ such that $u$ is foliated Schwarz symmetric with respect to $p$, and $v$ is foliated Schwarz symmetric with respect to $-p$.
\end{thm}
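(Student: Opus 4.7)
The plan is to analyze, for every $H\in\cH_0$, the differences $U:=u_H-u$ and $V:=v-v_{\widehat H}$ on $H\cap\Omega$, derive from the system and (P6) a dichotomy ``$U\equiv V\equiv 0$ or $U,V>0$ strictly in $\mathrm{int}(H\cap\Omega)$'', and then extract the antipodal axis $p$ from a maximum point of $u$ on some sphere. Combined with Proposition~\ref{prop: equivalent charact for Sch symmetry} and the identity $\cH_0(-p)=\{\widehat H:H\in\cH_0(p)\}$, this yields the two foliated Schwarz symmetries.

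For the comparison, fix $H\in\cH_0$. By the definition of polarization, $U,V\ge 0$ on $H\cap\Omega$, with $U=V=0$ on $\partial(H\cap\Omega)$ (since $u=v=0$ on $\partial\Omega$ and $\sigma_H=\mathrm{id}$ on $\partial H$). Using the hypothesis that $(u_H,v_{\widehat H})$ solves \eqref{eq:system2eq}, subtracting the two copies of the system and expanding $P_u,P_v$ via the fundamental theorem of calculus yields on $H\cap\Omega$ the linear system
\[
-\Delta U+(V_1-a)U=-\tilde b\,V,\qquad -\Delta V+(V_2-b')V=-a'\,U,
\]
where $\tilde b,a'$ are integral averages of $P_{uv}$ along segments whose endpoints have both coordinates strictly positive in $\Omega$ (recall $u,v>0$ from \eqref{eq:system2eq-0}, which also forces $u_H,v_{\widehat H}>0$). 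Assumption~(P6) then makes $\tilde b,a'<0$ throughout $\Omega$, so both right-hand sides are nonnegative.

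Since $\Omega$ is a ball or an annulus and $0\in\partial H$, the set $H\cap\Omega$ is connected for $N\ge 2$, and the strong maximum principle applied to each equation yields $U\equiv 0$ or $U>0$ in $\mathrm{int}(H\cap\Omega)$, and likewise for $V$. Crucially, (P6) synchronizes the two alternatives: if $U\equiv 0$ the first equation gives $-\tilde b V\equiv 0$, hence $V\equiv 0$ since $\tilde b<0$ pointwise; the converse is symmetric. Thus for every $H\in\cH_0$ either $(u_H,v_{\widehat H})=(u,v)$ (equivalently, $H$ is dominant for $u$ and $\widehat H$ is dominant for $v$), or both $U>0$ and $V>0$ strictly in $\mathrm{int}(H\cap\Omega)$.

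To choose the axis, if $u$ is radial then the coupled dichotomy forces $v$ to be radial as well and any $p$ works. Otherwise pick $r>0$ with $\partial B_r(0)\subset\Omega$ on which $u$ is non-constant, and let $p\in\partial B_1(0)$ satisfy $u(rp)=\max_{\partial B_r(0)}u$. For every $H\in\cH_0(p)$, the point $rp$ lies in $\mathrm{int}(H\cap\Omega)$, and the non-trivial alternative would give $u_H(rp)>u(rp)$, contradicting $u_H(rp)=\max\{u(rp),u(\sigma_H(rp))\}=u(rp)$ (since $\sigma_H(rp)\in\partial B_r(0)$). Hence every $H\in\cH_0(p)$ is dominant for $u$, and by the coupled dichotomy every $H'\in\cH_0(-p)$ is dominant for $v$; Proposition~\ref{prop: equivalent charact for Sch symmetry} concludes. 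The main obstacle is the coupling of the two maximum-principle dichotomies: without the strict negativity of $P_{uv}$ on $(0,\infty)^2$, one would only obtain two independent binary choices for $U$ and $V$ with no way to force $u$ and $v$ to share a common symmetry axis.
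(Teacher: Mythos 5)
Your proposal is correct and follows essentially the same route as the paper: compare $u_H-u$ and $v-v_{\widehat H}$ on $\Omega\cap H$, use the strong maximum principle with (P6) supplying the sign of the coupling terms, choose the axis $p$ at a maximum point of $u$ on a sphere $\partial B_r(0)$ to rule out the strict alternative, and conclude with Proposition~\ref{prop: equivalent charact for Sch symmetry}. The only cosmetic differences are that you invoke the maximum principle for both components and add a separate (unnecessary) radial case, whereas the paper applies it only to $w=u_H-u$ and then gets $v=v_{\widehat H}$ directly from the identity $P_u(u_H,v_{\widehat H})=P_u(u_H,v)$ and the strict monotonicity of $t\mapsto P_u(s,t)$, treating all cases uniformly.
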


\begin{proof}
Take $r>0$ such that $\partial B_r(0)\subseteq \Omega$ and let $p\in \partial B_1(0)$ be such that $\max_{\partial B_r(0)}u =u(rp)$. Given $H\in \cH_0(p)$, we will prove that $H$ is dominant for $u$ and subordinate for $v$. This combined with Proposition \ref{prop: equivalent charact for Sch symmetry} immediately provides the conclusion of the theorem. From
$$
-\Delta u+V_1(x)u=P_u(u,v),\quad \text{ and } -\Delta u_H+V_1(x)u_H =P_u(u_H,v_{\widehat H})
$$
it follows that, for $x\in \Omega\cap H$, $w(x):=u_H(x)-u(x)\geq0$ and
\begin{equation}\label{eq: equation for u_H-u}
-\Delta w +c(x)w=P_u(u_H,v_{\widehat H})-P_u(u_H,v),
\end{equation}
with $c(x)=V_1(x)-(P_u(u_H,v)-P_u(u,v))/(u_H-u)\in L_\text{loc}^\infty(\Omega)$. As $v_{\widehat H}\leq v$ in $\Omega \cap H$, condition (P6) implies that $P_u(u_H,v)\leq P_u(u_H,v_{\widehat H})$ in $\Omega\cap H$. Thus 
$$
-\Delta w + c(x)w\geq 0 \qquad \text{ and }\qquad w\geq 0 \quad \text{ in } \Omega\cap H,
$$
which implies (by the Strong Maximum Principle, see for instance
\cite[Theorem 1.7]{Linbook}) that either $w>0$ or $w\equiv 0$ in
$\Omega\cap H$. By the choice of $p$, we have that $rp\in \Omega\cap
H$ and that $w(rp)=0$, and then it must be $u=u_H$ and therefore $w
\equiv 0$ in $\Omega\cap H$. Moreover, coming back to \eqref{eq: equation for u_H-u}, we now see that
\begin{equation}
  \label{eq:18}
P_u(u_H,v_{\widehat H})=P_u(u_H,v)
\end{equation}
and hence, since the map $t\mapsto P_u(s,t)$ is strictly decreasing
for each fixed $s$ as a consequence of (P6), we obtain $v=v_{\widehat H}$ in $\Omega\cap H$. Thus we have proved that $H$ is dominant for $u$ and subordinate for $v$, and the theorem follows.
\end{proof}

\begin{rem}
First we observe that (P6) implies condition (P3). Second, we note
that Theorem \ref{thm:main_result} holds true under slightly more
general assumptions replacing (P6): we can assume instead that 
\begin{quote}
for each $s\geq 0$, the function
$t\mapsto P_u(s,t)$ is nonincreasing in $[0,\infty)$ and strictly
decreasing in $[0,\varepsilon)$ for some $\varepsilon>0$.
\end{quote}
In fact, one can proceed in the previous proof until \eqref{eq:18}. Then, by looking at the second equations of the systems, we would have
$$
-\Delta (v-v_{\widehat H})+  \Bigl(V_2(x)-\frac{P_v(u,v)-P_v(u,v_{\widehat H})}{v-v_{\widehat H}}\Bigr)(v-v_{\widehat H})=0\text{ and } v\geq v_{\widehat H} \text{ in }\Omega\cap H,
$$
which gives that either $v>v_{\widehat H}$ or $v=v_{\widehat H}$ in $\Omega \cap H$. Thus by \eqref{eq:18} and the new assumptions we would have equality.

Alternatively,we could have also supposed that
\begin{quote}
for each $t\geq 0$, the function $s\mapsto P_v(s,t)$ is nonincreasing in $[0,\infty)$ and strictly
decreasing in $[0,\varepsilon)$ for some $\varepsilon>0$.
\end{quote} 
\end{rem}

Before we may complete the proof of Theorem~\ref{coro:main}, we first
need the following lemma.

\begin{lemma}\label{lemma: P(u,v) polarized decreases integral}
Let $P\in C^2(\R^2)$ be such that (P6) holds. Take $u,v>0$ such that $\displaystyle \int_\Omega P(u,v)\, dx<+\infty$. Then for every $H\in \cH_0$ we have that
$$\int_\Omega P(u,v)\, dx \leq \int_\Omega P(u_H, v_{\widehat H})\, dx.$$
\end{lemma}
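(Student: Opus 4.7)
The plan is to reduce the integral inequality to a pointwise inequality on pairs of antipodal points $\{x,\sigma_H x\}$, and then invoke the submodularity of $P$ that is encoded in (P6).

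First I would exploit that $\Omega$ is radial and $\partial H$ passes through $0$, so $\sigma_H(\Omega)=\Omega$. Splitting the integral as
$$
\int_\Omega P(u,v)\,dx=\int_{H\cap\Omega}P(u(x),v(x))\,dx+\int_{\widehat H\cap\Omega}P(u(x),v(x))\,dx
$$
and applying the change of variables $x\mapsto \sigma_H(x)$ (which has Jacobian $1$) in the second integral yields
$$
\int_\Omega P(u,v)\,dx=\int_{H\cap\Omega}\Bigl[P(u(x),v(x))+P(u(\sigma_Hx),v(\sigma_Hx))\Bigr]dx.
$$
The same manipulation applied to $(u_H,v_{\widehat H})$ gives an analogous expression for $\int_\Omega P(u_H,v_{\widehat H})\,dx$. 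Hence it suffices to prove, pointwise for each $x\in H\cap\Omega$, that
$$
P(u(x),v(x))+P(u(\sigma_Hx),v(\sigma_Hx))\le P(u_H(x),v_{\widehat H}(x))+P(u_H(\sigma_Hx),v_{\widehat H}(\sigma_Hx)).
$$

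Next, fix $x\in H\cap\Omega$ and set $a_1=u(x)$, $a_2=u(\sigma_Hx)$, $b_1=v(x)$, $b_2=v(\sigma_Hx)$. Then by the definition of polarization, and since $\sigma_H=\sigma_{\widehat H}$ with $x\in H\setminus\widehat H$,
$$
u_H(x)=\max\{a_1,a_2\},\quad u_H(\sigma_Hx)=\min\{a_1,a_2\},
$$
$$
v_{\widehat H}(x)=\min\{b_1,b_2\},\quad v_{\widehat H}(\sigma_Hx)=\max\{b_1,b_2\}.
$$
If $(a_1-a_2)(b_1-b_2)\le 0$, the two sides are actually equal (the unordered pairs coincide). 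In the remaining case $(a_1-a_2)(b_1-b_2)>0$ we may assume, up to swapping indices, $a_1\le a_2$ and $b_1\le b_2$, so the inequality to prove reduces to
$$
P(a_1,b_1)+P(a_2,b_2)\le P(a_2,b_1)+P(a_1,b_2).
$$
This is the standard submodularity inequality, and it follows from (P6) via the identity
$$
[P(a_2,b_1)+P(a_1,b_2)]-[P(a_1,b_1)+P(a_2,b_2)]=-\int_{a_1}^{a_2}\!\int_{b_1}^{b_2}P_{uv}(s,t)\,dt\,ds\ge 0,
$$
which I would obtain by writing each difference $P(a_2,\cdot)-P(a_1,\cdot)$ as an integral of $P_u$ and then differentiating once more in the second variable.

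Finally, integrating the pointwise bound over $H\cap\Omega$ and reassembling yields the claimed inequality. I do not expect a real obstacle here: the only subtle points are the symmetry of $\Omega$ (already assumed) making the change of variables legitimate, and keeping track of which of $H,\widehat H$ a given point belongs to so that the max/min assignment for $u_H$ and $v_{\widehat H}$ is correct. The essential analytic content is the double-integral representation of the mixed difference, which turns (P6) directly into the submodularity inequality needed.
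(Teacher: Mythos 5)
Your proposal is correct and follows essentially the same route as the paper: reduce to the pointwise submodularity inequality on the pair $\{x,\sigma_H x\}$ and deduce it from (P6) via the double-integral identity for the mixed difference. Your version just spells out the case analysis and the pointwise values of $u_H,v_{\widehat H}$ slightly more explicitly than the paper does.
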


\begin{proof}
We claim that
$$P(a,c)+P(b,d) \leq P(\max\{a,b\},\min\{c,d\})+P(\min\{a,b\}, \max\{c,d\}) $$ for every $a,b,c,d>0$. In the case that $a\geq b$ and $d\geq c$ the result trivially holds. On the other hand, suppose that $a\geq b$ and $c\geq d$. Then
\begin{multline*}
0 \geq  \int_b^a \int_d^c P_{uv}(\xi, \zeta) \, d\zeta\, d\xi =\int_b^a (P_u(\xi,c)-P_u(\xi,d))\, d\xi\\
 = P(a,c)-P(b,c)-P(a,d)+P(b,d),
\end{multline*}
which proves the claim. From this we conclude that
\begin{eqnarray*}
\int_\Omega P(u,v)\, dx &=& \int_{\Omega\cap H} [ P(u(x),v(x))+P(u(\sigma_H(x)), v(\sigma_H(x))]\, dx\\
&\leq & \int_{\Omega \cap H} [ P(u_H(x), v_{\widehat H}(x))+  P(u_H(\sigma_H(x)), v_{\widehat H}(\sigma_H(x)) ) ]\, dx\\
&=& \int_\Omega P( u_H, v_{\widehat H}  )\, dx.
\end{eqnarray*}
\end{proof}

Finally we may complete the

\begin{proof}[Proof of Theorem~\ref{coro:main}]
Let $(u,v)\in C^2(\Omega,\R^2)\cap C(\overline\Omega,\R^2)$ be a
classical solution of \eqref{eq:system2eq-0} minimizing $E|_\cN$ and
such that $(u,v)\in \cM$. Take $H\in \cH_0$. By
Theorem~\ref{thm:main_result}, we only need to show that $(u_H,v_{\widehat H})$ is also a solution to \eqref{eq:system2eq}. First of all observe that for each $t,s>0$, we have
\begin{eqnarray*}
E(t u_H, s v_{\widehat H})&=&\frac{t^2}{2}\|u_H\|_1^2+\frac{s^2}{2}\|v_{\widehat H}\|_2^2-\int_\Omega P(t u_H,s v_{\widehat H})\, dx\\
								&=& \frac{t^2}{2}\|u_H\|_1^2+\frac{s^2}{2}\|v_{\widehat H}\|_2^2-\int_\Omega P((t u)_H, (s v)_{\widehat H})\, dx\\
								&\leq& \frac{t^2}{2}\|u\|_1^2+\frac{s^2}{2}\|v\|_2^2-\int_\Omega P( t u, s v)\, dx \\
								&=& E( t u, s v),
\end{eqnarray*}
where we have used Lemmas \ref{lemma: invariance properties of polarization} and \ref{lemma: P(u,v) polarized decreases integral}.
Hence, as $(u,v)\in \cM$, we have that also $(u_H,v_{\widehat H})\in \cM$, and so there exists $\bar t, \bar s>0$ such that $(\bar tu_H,\bar sv_{\widehat H})\in \cN$. Therefore, by Proposition \ref{prop:unique_maximum},
\begin{equation*}
c \leq E(\bar t u_H, \bar s v_{\widehat H})\leq E(\bar t u, \bar s v)\leq \max_{t,s\geq 0} E(tu,sv) = E(u,v)=c.
\end{equation*}
and thus $\bar t=\bar s=1$ by the uniqueness of the maximum as stated
in Proposition \ref{prop:unique_maximum}. 
Thus $(u_H, v_{\widehat H})\in \cN$ and $E(u_H, v_{\widehat
  H})=c$. Therefore the second statement in Theorem
\ref{thm:existence} implies that $(u_H, v_{\widehat H})$ is a solution
of \eqref{eq:system2eq}, as required.
\end{proof}


\section{Some special system classes}
\label{sec:applications}

In this section, we will discuss results for special subclasses of system
(\ref{eq:1}), and in particular we will give the proof of Theorem~\ref{coro:main-1} with is
concerned with problem~(\ref{eq:application}). Motivated in particular
by results in the papers \cite{CTV1, CTV2}, we now discuss a general family of
functions $P$ where the interaction terms are seperated from the others. For this let $H\in C^2(\R^k)$ and $f_i\in C^1(\R)$ for $i=1,\ldots, k$. Define $F_i(s):=\int_0^s f_i(\xi)\, d\xi$. For
\begin{equation}
  \label{eq:16}
P \in C^2(\R^k),\qquad P(u)=\sum_{i=1}^k F_i(u_i)-H(u),  
\end{equation}
let us see under which assumptions $P$ satisfies (P1)--(P4). We consider the following assumptions for the functions $f_i$.

\begin{itemize}
\item[(a1)] For each $i$ there exists a constant $C_i>0$ such that
$$
|f_i'(s)|\leq C_i(1+|s|^{p-2})\quad \text{for $s \ge 0$ with some $p
  \in (2,2^\ast)$,}
$$
where $2^\ast=2N/(N-2)$ if $N\geq 3$, $2^\ast=+\infty$ otherwise.
\item[(a2)] $f_i(s)={\rm o}(s)$ as $s\to 0$, for every $i=1,\ldots, k$.
\item[(a3)] There exists $\gamma>0$ ($2+\gamma\leq p$) such that
$$
0<(1+\gamma)f_i(s)s\leq f_i'(s)s^2, \qquad \text{for all $s \ge 0$}.
$$
\end{itemize}
Moreover, for the interaction potential $H$ we assume the following.
\begin{itemize}
\item[(H1)] There exist constants $C>0$ and $0<\alpha\leq \gamma$ such that
$$
|H_{u_i u_j}(u)|\leq  C(1+\sum_{i=1}^k |u_i|^\alpha),\qquad \text{for $i,j\in \{1,\ldots, k\},\ u\in C^+.$}
$$
\item[(H2)] $H(0)=0$ and $H_{u_i}(u_1,\ldots,
  u_{i-1},0,u_{i+1},\ldots, u_k)=0$ for $i=1,\dots,k$ and $u \in C^+$.
\item[(H3)] $H_{u_i}(u) \geq 0$ for $i=1,\dots,k$ and $u\in C^+$.
\item[(H4)] For every $u \in C^+$, the matrix 
$$
(h_{ij})_{ij}=\Bigl( \delta_{ij} (1+\alpha) H_{u_i}(u)u_i-H_{u_i u_j}(u) u_i u_j\Bigr)_{i,j=1,\ldots, k}
$$
is positive semidefinite, where $\alpha$ is the constant appearing on (H1). \footnote{Actually this is equivalent to ask (H1) and (H4) for two different constants $\alpha_1,\alpha_2\leq \gamma$, as in each case if each assumption is true for some $\beta$, it is true for every $\bar \beta\geq \beta$.}
\end{itemize}

We then have the following result.

\begin{thm}
\label{sec:applications-2}
Let $f_i$ satisfy (a1)--(a3) and $H$ satisfy ($H1$)--($H4$). Then
(P1)--(P5) hold for $P$ defined in~(\ref{eq:16}). Hence, if
the functions $V_i \in L^\infty(\Omega)$, $i=1,\dots,k$, satisfy $(P0)$, then the assertions of
Theorems~\ref{thm:existence} and \ref{sec:introduction-1} are true. In particular, the system
$$\left\{\begin{array}{l}
-\Delta u_i+V_i(x)u=f_i(u_i)-H_{u_i}(u)\\
u_i\in H^1_0(\Omega)(u),\ u_i>0\ \text{ in }\Omega.
\end{array}\right. \qquad i=1,\ldots, k.
$$
admits a non-trivial solution which minimizes the functional
$E|_{\cN}$.\\
Moreover, if in addition $\alpha< \gamma$ in (H1), then every $u \in
\cH$ with $u_i \ge 0, u_i \not \equiv 0$ for
$i=1,\dots,k$ is contained in $\cM$, and therefore equality holds in
(\ref{eq:3}).
\end{thm}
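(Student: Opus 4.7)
The plan is to verify (P1)--(P5) for $P$ as in \eqref{eq:16} and then invoke Theorems \ref{thm:existence} and \ref{sec:introduction-1}; the stronger conclusion under $\alpha<\gamma$ will be handled last. Throughout, the derivatives split as
$$
P_{u_i}(u) = f_i(u_i) - H_{u_i}(u), \qquad P_{u_iu_j}(u) = \delta_{ij} f_i'(u_i) - H_{u_iu_j}(u),
$$
so each (Pj) reduces to combining an assumption on $f_i$ with an assumption on $H$. Specifically, (P1) is immediate from (a1) and (H1) since $\alpha \le \gamma \le p-2$; (P2) follows from $f_i(0)=0$ (a consequence of (a2)) together with $H(0)=0$ and (H2); and (P3) follows from (H2), (H3), and the positivity $f_i(s)s>0$ for $s>0$ granted by (a3).

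For (P4), I would split $M(u) = D(u) - (h_{ij}(u))$, where $D(u)$ is the diagonal matrix with entries $(1+\alpha) f_i(u_i)u_i - f_i'(u_i)u_i^2$ and $(h_{ij}(u))$ is the matrix appearing in (H4) (with the same constant $\alpha$). By (a3) and $\alpha \le \gamma$, one has $(1+\alpha)f_i(u_i)u_i \le (1+\gamma)f_i(u_i)u_i \le f_i'(u_i)u_i^2$, so $D(u)$ is negative semidefinite, while (H4) makes $(h_{ij}(u))$ positive semidefinite; hence $M(u)$ is negative semidefinite. For (P5), note that $f_i'(0)=0$ (since $f_i(0)=0$ and $f_i(s)=o(s)$ as $s\to 0$ by (a2)); moreover the map $t\mapsto H_{u_i}(u_1,\ldots,t,\ldots,u_k)$ on $[0,\infty)$ is nonnegative by (H3) and vanishes at $t=0$ by (H2), so $t=0$ is a boundary minimum of this map, giving $H_{u_iu_i}(u_1,\ldots,0,\ldots,u_k) \ge 0$ and thus $P_{u_iu_i}(u_1,\ldots,0,\ldots,u_k) \le 0$. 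Theorems \ref{thm:existence} and \ref{sec:introduction-1} then apply and yield the existence statement together with the inequality in \eqref{eq:3}.

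For the final assertion, assume $\alpha<\gamma$ and fix $u\in\cH$ with $u_i\ge 0$, $u_i\not\equiv 0$. By the even extension of $P$ in each variable (see \eqref{eq:8}), I may restrict to $t_i \ge 0$. Set $R = \max_i t_i$ and pick $i_0$ with $t_{i_0}=R$. The goal is to show $E(t_1 u_1,\ldots,t_k u_k)\to -\infty$ as $R\to\infty$. From (a3) one checks that $s\mapsto f_i(s)/s^{1+\gamma}$ is nondecreasing on $(0,\infty)$, yielding the lower bound $F_i(s)\ge c_i s^{2+\gamma} - c_i'$ for $s\ge 0$. Combining this with $u_{i_0}\not\equiv 0$ gives $\int_\Omega F_{i_0}(Ru_{i_0})\,dx \ge cR^{2+\gamma}$ for $R$ large. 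On the other hand, (H1), (H2), and a Taylor expansion of $H$ at the origin give $|H(v)|\le C(|v|^2+|v|^{2+\alpha})$, so $\int_\Omega |H(t_1u_1,\ldots,t_k u_k)|\,dx \le C'(R^2 + R^{2+\alpha})$. Since $F_j\ge 0$ on $[0,\infty)$ by (a3), combining these estimates yields
$$
E(t_1 u_1,\ldots,t_k u_k) \;\le\; CR^2 - cR^{2+\gamma} + C'R^{2+\alpha} \;\longrightarrow\; -\infty \quad \text{as } R\to\infty,
$$
since $\alpha<\gamma$. Hence $u\in\cM$, and equality in \eqref{eq:3} follows from the second part of Theorem~\ref{sec:introduction-1}. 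The main technical point is that the useful negative contribution $-cR^{2+\gamma}$ comes from the \emph{single} coordinate realizing the maximum, whereas $H$ may a priori contribute at rate $R^{2+\alpha}$ across all coordinates; the \emph{strict} inequality $\alpha<\gamma$ is exactly what ensures this margin is enough.
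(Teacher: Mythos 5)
Your proposal is correct and follows essentially the same route as the paper: verify (P1)--(P5) by splitting $P_{u_iu_j}=\delta_{ij}f_i'-H_{u_iu_j}$ (with (P4) from (a3) plus (H4) and (P5) from $f_i'(0)=0$ plus the sign of $H_{u_i}$ near a vanishing coordinate), and then, for $\alpha<\gamma$, use $F_i(s)\ge C s^{2+\gamma}-D$ together with $|H(v)|\le C(|v|^2+|v|^{2+\alpha})$ to force $E(t_1u_1,\dots,t_ku_k)\to-\infty$. The only cosmetic difference is that you estimate via $R=\max_i t_i$ and keep only the $F$-term of the maximizing coordinate, whereas the paper keeps the coordinate-by-coordinate bound; both hinge on exactly the same growth comparison $\alpha<\gamma$.
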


\begin{proof}
(P1) is an immediate consequence of $(a1)$ and $(H1)$, and
(P2) is an immediate consequence of $(a2)$ and $(H2)$.
(P3) follows directly from (H3), and (P4) follows directly from (a3) and (H4). As for (P5), observe that
$$
P_{u_iu_i}(u_1,\dots,u_{i-1},0,u_{i+1},\dots,u_k)=f_i'(0)-\lim_{t\to 0^+} \frac{H_{u_i}(u_1,\dots,u_{i-1},t,u_{i+1},\dots,u_k)}{t}\leq f_i'(0)=0,
$$ 
for $i=1,\dots,k$ by (a2) and (H3). As a consequence, the assertions of
Theorems~\ref{thm:existence} and \ref{sec:introduction-1} are true.\\
Finally, let us assume that $\alpha< \gamma$ holds in assumption
(H1), and let $u \in \cH$ with $u_i \ge 0, u_i \not \equiv 0$ for
$i=1,\dots,k$. We show that $u \in \cM$. For this we note
that condition (a3) implies the existence of constants $C_i,D_i>0$ such that
$$
F_i(t)\geq C_i t^{2+\gamma} - D_i, \quad \forall s\geq 0,\:i=1,\dots,k.
$$
Thus, for some constant $C_1>0$,
\begin{eqnarray*}
E(t_1 u_1,\dots,t_k u_k) &=& \sum_{i=1}^k
\Bigl(\frac{t_i^2}{2}\|u_i\|_i^2-\int_\Omega F_i(t_iu_i)\, dx\Bigr) +
\int_\Omega H(t_1 u_1,\dots,t_k u_k)\, dx\\
		&\leq& \sum_{i=1}^k
\Bigl(\frac{t_i^2}{2}\|u_i\|_i^2 - C_i t_i^{2+\gamma} \int_\Omega
|u_i|^{2+\gamma}\,dx + Ct_i^{2+\alpha} \int_\Omega |u_i|^{2+\alpha}\,
dx \Bigr) +C_1 \to -\infty,
\end{eqnarray*}
as $|t_1|+\dots+|t_k|\to +\infty$, since $\gamma>\alpha$ and $\int_\Omega
|u_i|^{2+\gamma}\,dx>0$ for $i=1,\dots,k$.
\end{proof}

Theorem~\ref{sec:applications-2} generalizes the existence result
contained in \cite[Theorem 2.1]{CTV1} and \cite[Theorem
2.2]{CTV2}. The main difference is that we allow $\alpha=\gamma$ in (H1), which means that we allow $F_i$ and $H$ to have the
same kind of growth at infinity. We point out that in this case it is
not necessarily true that $u \in \cM$ for every $u \in 
\cH$ with $u_i \ge 0, u_i \not \equiv 0$ for
$i=1,\dots,k$. As an example, consider the two-component system
\begin{equation}
  \label{eq:17}
-\Delta u_1 = u_1^3 -\beta u_2^2 u_2,\quad -\Delta u_2 = u_2^3 -\beta u_1^2 u_2
\qquad u_1,u_2 \in H^1_0(\Omega)
\end{equation}
which in dimension $N \le 3$ and for $\beta>0$ is a special case of assumptions
(a1)--(a3), (H1)--(H4) with $f_1(t)=f_2(t)=t^3$,
$H(u_1,u_2)=\frac{\beta}{2} u_1^2 u_2^2$ and $\alpha=\gamma=2$. The
corresponding energy functional is then given by 
$$
u=(u_1,u_2) \mapsto E(u)=\sum_{i=1}^2 \int_\Omega \Bigl(\frac{1}{2}|\nabla
u_i|^2 -\frac{1}{4} |u_i|^4\Bigr)\,dx + \frac{\beta}{2} \int_\Omega
|u_1|^2|u_2|^2\,dx,
$$ 
and in case $\beta \ge 1$ we have $E(tw,tw) \to +\infty$ as $t \to
\infty$ for every $w \in H^1_0(\Omega) \setminus \{0\}$, so that
$(w,w) \not \in \cM$. Nevertheless, we will be able to show $\cN \subset \cM$ for system
(\ref{eq:17}) and the more general class of systems
(\ref{eq:application}) arising from the choice of functions 
\begin{equation}
  \label{eq:15}
f_i(u)= \lambda_i u^{p-1} \quad \text{and}\quad H(u)= \frac{1}{2}
\sum_{\stackrel{i,j=1}{i \not=j}}^k \beta_{ij}u_i^{q_i}u_j^{q_j},
\end{equation}
where $2<p<2^*$ and the other parameters satisfy (\ref{eq:4}). This
also leads to
equality in (\ref{eq:3}) and therefore to a minimax characterization
of $\inf_\cN E$.

\begin{prop}
\label{sec:applications-1}  
The class of functions given by (\ref{eq:15}) satisfies assumptions
$(a1)$--$(a3)$ and $(H1)$--$(H4)$ with $\alpha=\gamma=p-2$. Moreover, if
the functions $V_i \in L^\infty(\Omega)$, $i=1,\dots,k$, satisfy $(P0)$,
then we have $\cN \subset \cM$ in this case, where $\cN$ and $\cM$ are
defined with respect to the corresponding functional
$$
u \mapsto E(u)=\frac{1}{2}\sum_{i=1}^k \|u_i\|_i^2 -
\sum_{i=1}^k \frac{\lambda_i}{p} \int_\Omega |u_i|^p\,dx + \sum_{i \not= j}\beta_{ij}
                \int_\Omega  |u|^{q_i}|v|^{q_j}\, dx.
$$ 
\end{prop}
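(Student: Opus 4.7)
The plan has two parts. First, I would verify directly that $f_i(u) = \lambda_i u^{p-1}$ satisfies (a1)--(a3) with $\gamma = p-2$ (where the only non-obvious one, (a3), becomes an equality) and that $H(u) = \tfrac{1}{2}\sum_{i\neq j}\beta_{ij}u_i^{q_i}u_j^{q_j}$ satisfies (H1)--(H3) with $\alpha = p-2$. Conditions (H2) and (H3) follow from the explicit formula $H_{u_k}(u) = q_k u_k^{q_k-1}\sum_{j\neq k}\beta_{kj}u_j^{q_j}$ (using $q_k \geq 2$ and $\beta_{kj} \geq 0$); (H1) reduces to a polynomial growth bound using $q_i+q_j-2 \leq p-2$.

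The nontrivial part of the first step is (H4). Explicit computation gives the diagonal entry $h_{ii} = q_i(p-q_i)u_i^{q_i}\sum_{j\neq i}\beta_{ij}u_j^{q_j}$ and the off-diagonal entry $h_{ij} = -q_iq_j\beta_{ij}u_i^{q_i}u_j^{q_j}$. Symmetrizing, for every $z \in \R^k$,
\[
z^T (h_{ij})_{ij}\, z = \frac{1}{2}\sum_{i\neq j}\beta_{ij}u_i^{q_i}u_j^{q_j}\bigl[q_i(p-q_i)z_i^2 - 2q_iq_j z_iz_j + q_j(p-q_j)z_j^2\bigr].
\]
The binary quadratic form in the bracket has discriminant $-4q_iq_jp(p-q_i-q_j)\leq 0$ and leading coefficient $q_i(p-q_i) \geq q_iq_j > 0$, both by $p\geq q_i+q_j$, so each bracket is nonnegative and (H4) follows since $\beta_{ij}\geq 0$ and $u\in C^+$.

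For the second step (showing $\cN \subset \cM$), I may assume by (\ref{eq:6}) that $t_i \geq 0$. Given $u \in \cN$, I rescale via $T = t_1+\dots+t_k$ and $s_i = t_i/T$, with $s$ in the simplex $\Delta = \{s\in C^+: \sum s_i = 1\}$. Since the kinetic term is $O(T^2) = o(T^p)$ and interaction terms with $q_i+q_j<p$ contribute $o(T^p)$, one obtains
\[
T^{-p}E(Ts_1 u_1,\dots,Ts_k u_k) \;\longrightarrow\; -\Psi(s)\quad\text{uniformly on $\Delta$ as $T\to\infty$,}
\]
where $\Psi(s) = \tfrac{1}{p}\sum_l \lambda_l s_l^p \int_\Omega u_l^p\,dx - \tfrac{1}{2}\sum_{i\neq j,\, q_i+q_j=p}\beta_{ij}s_i^{q_i}s_j^{q_j}\int_\Omega u_i^{q_i}u_j^{q_j}\,dx$. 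The key estimate is Young's inequality applied pointwise to $(s_iu_i)^{q_i}(s_ju_j)^{q_j}$ with conjugate exponents $p/q_i, p/q_j$:
\[
s_i^{q_i}s_j^{q_j}\int_\Omega u_i^{q_i}u_j^{q_j}\,dx \;\leq\; \tfrac{q_i}{p}s_i^p \int_\Omega u_i^p\,dx + \tfrac{q_j}{p}s_j^p \int_\Omega u_j^p\,dx.
\]
After substituting and symmetrizing in $(i,j)$ with $\beta_{ij}=\beta_{ji}$,
\[
\Psi(s) \;\geq\; \frac{1}{p}\sum_l s_l^p\Bigl[\lambda_l\int_\Omega u_l^p\,dx - q_l \!\!\sum_{j\neq l,\, q_l+q_j=p}\!\!\beta_{lj}\int_\Omega u_l^{q_l}u_j^{q_j}\,dx\Bigr].
\]
Each bracket is at least $\|u_l\|_l^2 > 0$: by the Nehari identity $\|u_l\|_l^2 = \lambda_l\int u_l^p - q_l\sum_{j\neq l}\beta_{lj}\int u_l^{q_l}u_j^{q_j}$, and dropping the nonnegative terms with $q_l+q_j<p$ only decreases the right-hand side. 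By compactness of $\Delta$, $\min_\Delta\Psi \geq m > 0$, and hence $E(Ts_1u_1,\dots,Ts_ku_k) \leq -\tfrac{m}{2}T^p \to -\infty$.

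The main obstacle is the borderline regime $q_i+q_j = p$, where the interaction competes at the same order $T^p$ with the leading destabilizing term. The system (\ref{eq:17}) with $\beta\geq 1$ already shows that $\cH\setminus\{0\}\not\subset\cM$ in this regime; the strict positivity $\|u_l\|_l^2 > 0$ from the Nehari constraint is exactly what forces $\Psi>0$ uniformly on the simplex.
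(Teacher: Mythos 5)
Your verification of (a1)--(a3) and (H1)--(H4) is correct; for (H4) you replace the paper's argument (which bounds the eigenvalues of $(h_{ij})$ from below via Gershgorin's theorem, using $p\geq q_i+q_j$) by pairing the terms and checking that each binary form $q_i(p-q_i)z_i^2-2q_iq_jz_iz_j+q_j(p-q_j)z_j^2$ has nonpositive discriminant $-4q_iq_jp(p-q_i-q_j)$; that is an equally clean, self-contained route to positive semidefiniteness. For $\cN\subset\cM$ your scheme --- rescale $t=Ts$ onto the simplex, identify the uniform limit $-\Psi(s)$ of $T^{-p}E$, and force $\Psi>0$ through Young's inequality combined with the Nehari identity $\|u_l\|_l^2=\lambda_l\int_\Omega|u_l|^p\,dx-q_l\sum_{j\neq l}\beta_{lj}\int_\Omega u_l^{q_l}u_j^{q_j}\,dx$ --- rests on the same two ingredients as the paper, which avoids the asymptotic/compactness step altogether by applying Young with the additive constant $\kappa_{ij}=1-(q_i+q_j)/p$ directly to $t_i^{q_i}t_j^{q_j}$ and obtaining the explicit bound $E(t_1u_1,\dots,t_ku_k)\le\sum_i\bigl(\tfrac{t_i^2}{2}-\tfrac{t_i^p}{p}\bigr)\|u_i\|_i^2+\kappa$.

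There is, however, one slip: your displayed ``key estimate'' is not the inequality you subsequently use. Integrating the pointwise Young inequality for $(s_iu_i)^{q_i}(s_ju_j)^{q_j}$ gives $s_i^{q_i}s_j^{q_j}\int_\Omega u_i^{q_i}u_j^{q_j}\,dx\le\tfrac{q_i}{p}s_i^p\int_\Omega u_i^p\,dx+\tfrac{q_j}{p}s_j^p\int_\Omega u_j^p\,dx$, and substituting this into $\Psi$ produces the bracket $\lambda_l\int_\Omega u_l^p\,dx-q_l\bigl(\sum_{j\neq l}\beta_{lj}\bigr)\int_\Omega u_l^p\,dx$, in which the mixed integrals have been discarded; that quantity is not controlled by the Nehari constraint and is in fact negative for large couplings (compare system \eqref{eq:17} with $\beta\ge1$), so the step as displayed fails precisely in the borderline regime $q_i+q_j=p$ you single out. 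The bound you actually state, with $\int_\Omega u_l^{q_l}u_j^{q_j}\,dx$ inside the bracket, follows instead from Young applied only to the scalars, $s_i^{q_i}s_j^{q_j}\le\tfrac{q_i}{p}s_i^p+\tfrac{q_j}{p}s_j^p$ (valid there since $q_i/p+q_j/p=1$), keeping the factor $\int_\Omega u_i^{q_i}u_j^{q_j}\,dx$ intact --- which is exactly the paper's move. With this one-line repair the remainder of your argument (positivity of the bracket via the Nehari identity, uniformity on the compact simplex, and the reduction to $t_i\ge0$ by evenness) goes through.
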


\begin{proof}
Assumptions $(a_1)$--$(a_3)$ and $(H1)$--$(H3)$ are rather immediate. We now show that also
(H4) holds with the choice $\alpha=p-2$. Let $u
\in C^+$ and recall the matrix $(h_{ij})_{ij}$ defined in (H4). We have for each $i$
$$
h_{ii}=(1+\alpha)H_{u_i}(u)u_i-H_{u_i u_i}(u)u_i^2=(p-q_i) q_i
u_i^{q_i}\sum_{j\neq i}\beta_{ij}u_j^{q_j}>0
$$
and, for $j\neq i$,
$$
h_{ij}=- H_{u_i u_j}(u)u_iu_j=-q_iq_j \beta_{ij} u_i^{q_i}u_j^{q_j}<0.
$$
By the Gershgorin's theorem (see for instance \cite[Appendix 7]{lax}), the eigenvalues of $(h_{ij})_{ij}$ lie in the set
\begin{eqnarray*} 
\bigcup_{i=1}^k \Bigl\{\lambda:\ |\lambda-h_{ii}| \leq  \sum_{j\not=i}
|h_{ij}|\Bigr\} &\subseteq& \bigcup_{i=1}^k \Bigl\{\lambda: \lambda \geq
h_{ii}+ \sum_{j=1}^k h_{ij} \Bigr\}\\
&=&\bigcup_{i=1}^k  \Bigl\{\lambda: \lambda \geq   \sum_{j\neq i}
\beta_{ij} q_i(p-q_i-q_j)u_i^{q_i}u_j^{q_j}\Bigr\}.
\end{eqnarray*}
Hence (\ref{eq:4}) implies that all eigenvalues of $(h_{ij})_{ij}$ are
nonpositive, and hence $(h_{ij})_{ij}$ is a negative semidefinite matrix.\\
To show that $\cN \subset \cM$, let $u \in \cN$. For $(t_1,\dots,t_k)
\in C^+$, we then have, by Young's inequality, 
$$
t_i^{q_i} t_j^{q_j}= \frac{q_i}{p}t_i^{p}+\frac{q_j}{p}t_j^{p}+
\kappa_{ij} \qquad \text{for $i,j=1,\dots,k$ with $\kappa_{ij}= 1-\frac{q_i+q_j}{p} \ge 0$.}
$$
Consequently,
\begin{align*}
E(&t_1u_1,\dots,t_k u_k)= \frac{1}{2}\sum_{i=1}^k t_i^2 \|u_i\|_i^2 -
\sum_{i=1}^k \frac{\lambda_i}{p} t^p \int_\Omega |u_i|^p\,dx + \sum_{i \not= j}t_i^{q_i}t_j^{q_j}\beta_{ij}
                \int_\Omega  |u|^{q_i}|v|^{q_j}\, dx\\
&\leq 
\frac{1}{2}\sum_{i=1}^k t_i^2 \|u_i\|_i^2 -
\sum_{i=1}^k \frac{\lambda_i}{p} t_i^p \int_\Omega |u_i|^p\, dx+ \sum_{i \not= j}\Bigl(\frac{q_i t_i^{p}+q_j t_j^{p}}{p}+\kappa_{ij}\Bigr)\beta_{ij}
                \int_\Omega  |u|^{q_i}|v|^{q_j}\, dx\\
&= 
\frac{1}{2}\sum_{i=1}^k t_i^2 \|u_i\|_i^2 - \sum_{i=1}^k \frac{t_i^p}{p}
\Bigl(\lambda_i \int_\Omega |u_i|^p\, dx - q_i \sum_{j \not= i}
\beta_{ij} \int_\Omega  |u|^{q_i}|v|^{q_j}\, dx\Bigr)+\kappa\\
&= 
\sum_{i=1}^k \Bigl(\frac{t_i^2}{2}-\frac{t_i^p}{p}\Bigr) \|u_i\|_i^2 +\kappa \to -\infty
\qquad \text{as $t_1+\dots +t_k \to +\infty$}
\end{align*}
with $\kappa= \sum \limits_{i \not= j}\kappa_{ij}\beta_{ij}
\int_\Omega  |u|^{q_i}|v|^{q_j}\, dx$, where in the last step we have
used that $u \in \cN$. This shows $u \in \cM$, and we conclude that $\cN \subset \cM$.
\end{proof}

We may now complete the 

\begin{proof}[Proof of Theorem~\ref{coro:main-1}]
By Proposition~\ref{sec:applications-1} and
Theorem~\ref{sec:applications-2}, assumptions (P1)-(P5) are satisfied
for $P$ given in (\ref{eq:5}). Hence Theorem~\ref{thm:existence}
implies that $\inf_\cN E$ is attained, and that every minimizer $u \in \cN$ of $E|_{\cN}$ 
is a weak solution of (\ref{eq:application}). Moreover, by elliptic
regularity, noting that the right hand side of (\ref{eq:system2eq}) is
H{\"o}lder continuous, we find that $u 
\in C^2(\Omega,\R^k)\cap C(\overline\Omega,\R^k)$ is in fact a
classical solution. Since we also know from
Proposition~\ref{sec:applications-1} that $\cN \subset \cM$,
Theorem~\ref{sec:introduction-1} implies that  
$$ 
\inf_\cN E= \inf_{u\in \cM} \sup_{t_1,\ldots, t_k \ge 0} E(t_1
u_1,\ldots, t_k u_k),  
$$
and in case $k=2$ with $\Omega$, $V_1,V_2$ radially symmetric, it
follows from Theorem \ref{coro:main} that every $u 
\in C^2(\Omega,\R^k)\cap C(\overline\Omega,\R^k)$ minimizing $E$ on
$\cN$ is such that $u$ and $v$ are foliated Schwarz symmetric with respect to antipodal points.
\end{proof}

We add a symmetry result corresponding to the class of functions (\ref{eq:16})
in the case $k=2$ under the extra assumption that $\alpha<\gamma$ in (H1).
Hence we consider a system of the type
\begin{equation}\label{eq:H_uH_v_2eq}
\left\{\begin{array}{l}
-\Delta u=f_1(u)-H_u(u,v),\\
 -\Delta v=f_2(v)-H_v(u,v)\\
u,v\in H^1_0(\Omega),\ u,v>0 \text{ in }\Omega.
\end{array}\right.
\end{equation}

\begin{thm}\label{thm:symmetry_with_H}
Take $f_1,f_2$ satisfying (a1)-(a3) and $H$ satisfying ($\tilde H1$),
(H2)--(H4) and 
\begin{itemize}
\item[(H5)] $H_{uv}(s,t)>0$ for every $s,t>0$.
\end{itemize}
Furthermore, suppose that $\Omega$ is radially symmetric, and that
$V_1, V_2 \in L^\infty(\Omega)$ are radial functions satisfying (P0).  
Let $(u,v) \in \cN$ be a minimizer of $E|_\cN$. Then $u$ and $v$ are foliated Schwarz symmetric with respect to antipodal points.
\end{thm}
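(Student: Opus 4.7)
The plan is to recognize Theorem~\ref{thm:symmetry_with_H} as a direct specialization of Theorem~\ref{coro:main} to the separable potential
\[
P(u,v):=F_1(u)+F_2(v)-H(u,v),
\]
so that system \eqref{eq:H_uH_v_2eq} coincides with \eqref{eq:system2eq} and the corresponding energy coincides with $E$.

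First I would check the hypotheses of Theorem~\ref{coro:main} for this $P$ with $k=2$. Assumption (P0) is part of the statement. Theorem~\ref{sec:applications-2}, applied using (a1)--(a3) and the assumed form of (H1)--(H4), gives (P1)--(P5). The crucial new input (H5) yields (P6): indeed $P_{uv}(s,t)=-H_{uv}(s,t)<0$ for all $s,t>0$. Next, the strict inequality $\alpha<\gamma$ assumed in (H1) allows me to invoke the second half of Theorem~\ref{sec:applications-2}, which asserts that every $u\in\cH$ with nonnegative, nontrivial components lies in $\cM$; in particular the given minimizer $(u,v)\in\cN$ of $E|_\cN$ belongs to $\cM$.

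The remaining verification is a regularity upgrade. Theorem~\ref{thm:existence} gives that $(u,v)$ is a weak solution with strictly positive components; under the subcritical growth imposed by (a1) and (H1), the right-hand sides of \eqref{eq:H_uH_v_2eq} are locally H\"older continuous, so standard elliptic regularity delivers $(u,v)\in C^2(\Omega,\R^2)\cap C(\overline\Omega,\R^2)$. Since $\Omega$ and $V_1,V_2$ are radial, the hypotheses of Theorem~\ref{coro:main} are now all met and the theorem yields that $u$ and $v$ are foliated Schwarz symmetric with respect to antipodal points.

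I do not foresee a substantive obstacle: every ingredient has been prepared earlier in the paper, and the argument is essentially a bookkeeping reduction. The only mildly delicate point is the inclusion $\cN\subset\cM$, but this is precisely what the strict inequality $\alpha<\gamma$ is tailored to produce in Theorem~\ref{sec:applications-2}, which in turn is what makes the minimax step inside the proof of Theorem~\ref{coro:main} applicable to a ground state supplied only by Theorem~\ref{thm:existence}.
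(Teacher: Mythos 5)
Your proposal is correct and follows essentially the same route as the paper: the paper also proves this as a direct consequence of Theorem~\ref{coro:main}, using the second statement of Theorem~\ref{sec:applications-2} (with the strict inequality $\alpha<\gamma$, i.e.\ ($\tilde H1$)) to place the minimizer in $\cM$, with (H5) supplying (P6). The only difference is that you spell out the elliptic-regularity upgrade to a classical solution needed to invoke Theorem~\ref{coro:main}, a step the paper leaves implicit.
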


\begin{proof}
This is a direct consequence of Theorem~\ref{coro:main}, since the
second statement of 
Theorem~\ref{sec:applications-2} implies that $(u,v) \in \cM$ as a
consequence of assumption ($\tilde H1$). 
\end{proof}

\begin{rem}
\label{sec:some-special-system}
In general, minimal energy solution to \eqref{eq:H_uH_v_2eq} are not radial. So see this, let us rewrite the system \eqref{eq:H_uH_v_2eq} with an extra parameter $\beta>0$
\begin{equation}\label{eq:last_system}
\left\{\begin{array}{l}
-\Delta u=f(u)-\beta H_u(u,v),\\
 -\Delta v=f(v)-\beta H_v(u,v)\\
u,v\in H^1_0(\Omega),\ u,v>0 \text{ in }\Omega.
\end{array}\right.
\end{equation}
Suppose that $\Omega$ is either a ball or an annulus. Again, suppose that $f$ satisfy (a1)--(a3), and $H$ satisfy (H1)-(H4). For each $\beta>0$, denote by $E_\beta$ and $\cN_\beta$ the associate energy functional and Nehari manifold. Take $(u_\beta, v_\beta)$ to be a family of positive solutions of \eqref{eq:last_system} minimizing $E_\beta|_{\cN_\beta}$. Then, by the results shown in \cite{CTV1, CTV2}, we know that there exists $\bar u, \bar v>0$ such that $u_\beta\to \bar u, v_\beta\to v$ strongly in $H^1_0(\Omega)$, and $\bar w:=\bar u-\bar v$ satisfies
$$
-\Delta \bar w=f(\bar w) \qquad  \quad J(\bar w)=\min\{J(w):\ w^{\pm}\not\equiv 0,\ J'( w) w^+=J'( w) w^-=0\},
$$
with $J(w)=\frac{1}{2}\int_\Omega |\nabla w|^2\, dx-\int_\Omega F(w)\, dx$. Thus $\bar w$ is a \emph{least energy nodal solution} which, by \cite[Theorem 1.3]{AftalionPacella}, is know to be non radial. Therefore we conclude, from the strong convergence, that $(u_\beta,v_\beta)$ are non radial solutions, at least for sufficiently large $\beta$. 
\end{rem}

\subsection{An application within in a different variational setting}
\label{sec:further-applications}
We close this paper with an application of Theorem
\ref{thm:main_result} which does not fit in the framework of
Theorem~\ref{coro:main}. Consider the cubic system
\begin{equation}\label{eq: CLLL}
\left\{
\begin{array}{ll}
-\Delta u  =\lambda u - u^3-\beta u v^2 & {\rm in }\ \Omega\\[5pt]
-\Delta v = \mu v- v^3- \beta u^2 v & {\rm in }\ \Omega\\[5pt]
u,v\in H^1_0(\Omega),\ u,v>0\ \text{ in }\Omega,
\end{array}
\right.
\end{equation}
where we consider $\beta>0$.
Observe that due to the sign of the pure nonlinearities, this is not a particular case of \eqref{eq:application}.
Following \cite{CLLL}, in this case a minimal energy solutions is defined as a minimizer of the functional
$$I(u,v)=\frac{1}{2}\int_\Omega (|\nabla u|^2+|\nabla v|^2)\, dx + \frac{1}{4}\int_\Omega (u^4+v^4)\, dx + \frac{\beta}{2} \int_\Omega u^2 v^2\, dx $$
constrained to the manifold
$$\cS=\{ (u,v)\in H^1_0(\Omega)\times H^1_0(\Omega):\ \int_\Omega u^2\, dx =\int_\Omega v^2\, dx=1 \}$$
(which represents a mass conservation law). With this framework, $\lambda$ and $\mu$ are understood as Lagrange multipliers, and
\begin{equation}\label{eq: lambda and mu}
\lambda=\lambda(u,v)=\int_\Omega (|\nabla u|^2+u^4 + \beta u^2 v^2)\, dx,\qquad  \mu=\mu(u,v)=\int_\Omega (|\nabla v|^2+v^4 + \beta u^2 v^2)\, dx.
\end{equation}
By using direct methods and the maximum principle, it is easy to prove that \eqref{eq: CLLL} admits a positive solution, minimizer of $I|_{\cS}$.

\begin{thm}
Let $u,v>0$ be minimizers of $I|_{\cS}$, hence in particular solutions of \eqref{eq: CLLL}. Then $u$ and $v$ are foliated Schwarz symmetric with respect to antipodal points.
\end{thm}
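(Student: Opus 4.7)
The plan is to reduce the statement to Theorem~\ref{thm:main_result} by viewing \eqref{eq: CLLL} as a special case of \eqref{eq:system2eq} with constant radial potentials $V_1\equiv -\lambda$ and $V_2\equiv -\mu$, and with $P(s,t)=-\tfrac{1}{4}s^4-\tfrac{1}{4}t^4-\tfrac{\beta}{2}s^2t^2$. This $P$ is in $C^2(\R^2)$ and satisfies (P6), since $P_{uv}(s,t)=-2\beta st<0$ whenever $s,t>0$. The only nontrivial task is then to verify the hypothesis of Theorem~\ref{thm:main_result}: for every $H\in \cH_0$, the polarized pair $(u_H,v_{\widehat H})$ should again be a classical solution of \eqref{eq: CLLL} with the same Lagrange multipliers.

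First I would check that polarization preserves the mass constraint: by Lemma~\ref{lemma: invariance properties of polarization}(i) with $F(x,t)=t^2$, $\int_\Omega u_H^2\,dx=\int_\Omega u^2\,dx=1$ and similarly for $v_{\widehat H}$, so $(u_H,v_{\widehat H})\in \cS$. Next I would compare the functional values: Lemma~\ref{lemma: invariance properties of polarization}(ii) gives $\int|\nabla u_H|^2=\int|\nabla u|^2$ and the analogue for $v_{\widehat H}$, while part (i) with $F(x,t)=t^4$ yields $\int u_H^4=\int u^4$ and $\int v_{\widehat H}^4=\int v^4$. For the coupling term I would apply Lemma~\ref{lemma: P(u,v) polarized decreases integral} to the auxiliary function $Q(s,t):=-\tfrac{\beta}{2}s^2t^2$ (which satisfies (P6), so the lemma applies) to obtain $\int_\Omega u_H^2 v_{\widehat H}^2\,dx\le \int_\Omega u^2 v^2\,dx$. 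Combining everything gives $I(u_H,v_{\widehat H})\le I(u,v)$.

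Since $(u,v)$ minimizes $I$ on $\cS$ and $(u_H,v_{\widehat H})\in \cS$, equality must hold throughout, so $(u_H,v_{\widehat H})$ is also a minimizer. The constrained Euler--Lagrange conditions then make $(u_H,v_{\widehat H})$ a weak solution of \eqref{eq: CLLL} with Lagrange multipliers $\tilde \lambda, \tilde \mu$ prescribed by the formulas \eqref{eq: lambda and mu}. Because \emph{each} of the three integrals appearing in \eqref{eq: lambda and mu} is preserved by the polarization (by the three preservation identities above, combined with the equality case just established), one gets $\tilde\lambda=\lambda$ and $\tilde\mu=\mu$; hence $(u_H,v_{\widehat H})$ solves the same system as $(u,v)$. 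Standard elliptic regularity applied to a system with polynomial right-hand side then promotes $(u_H,v_{\widehat H})$ to a classical $C^2(\Omega)\cap C^1(\overline\Omega)$ solution, and Theorem~\ref{thm:main_result} delivers the desired foliated Schwarz symmetry with respect to antipodal points.

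The delicate point is the third paragraph: one must both exploit equality in the polarization inequality to keep the coupling integral invariant, and then track this invariance through \eqref{eq: lambda and mu} to conclude that the polarized pair satisfies the \emph{same} system rather than a perturbation of it. Everything else is a direct combination of the polarization toolkit (Lemmas~\ref{lemma: invariance properties of polarization} and \ref{lemma: P(u,v) polarized decreases integral}) with the abstract symmetry criterion in Theorem~\ref{thm:main_result}.
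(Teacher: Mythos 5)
Your proposal is correct and follows essentially the same route as the paper: preservation of the constraint and of the uncoupled integrals under polarization, the inequality $\int_\Omega u_H^2 v_{\widehat H}^2\,dx\le\int_\Omega u^2v^2\,dx$ from Lemma~\ref{lemma: P(u,v) polarized decreases integral}, the resulting equality chain forcing $(u_H,v_{\widehat H})$ to be a minimizer with the same Lagrange multipliers \eqref{eq: lambda and mu}, and then Theorem~\ref{thm:main_result}. (Your explicit application of the lemma to $Q(s,t)=-\tfrac{\beta}{2}s^2t^2$, which does satisfy (P6), is in fact a slightly more careful phrasing of the same step.)
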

\begin{proof}
We start with the observation that $(u,v)$ solve \eqref{eq: CLLL} with $\lambda,\mu$ given by \eqref{eq: lambda and mu}. For every $H\in \cH_0$, by Lemma \ref{lemma: invariance properties of polarization}-(i) we deduce that $(u_H,v_{\widehat H})\in \cS$. Moreover, Lemma \ref{lemma: P(u,v) polarized decreases integral} applied to the map $(u,v)\mapsto u^2v^2$ gives 
\begin{eqnarray}\label{eq:1-1}
\min_{\cS} I&\leq & I(u_H,v_{\widehat H})=\frac{1}{2}\int_\Omega (|\nabla u_H|^2+|\nabla v_{\widehat H}|^2)\, dx + \frac{1}{4}\int_\Omega (u_H^4+v_{\widehat H}^4)\, dx +\frac{\beta}{2}\int_\Omega u_H^2 v_{\widehat H}^2\, dx \nonumber \\
&=& \frac{1}{2}\int_\Omega (|\nabla u|^2+|\nabla v|^2)\, dx + \frac{1}{4}\int_\Omega (u^4+v^4)\, dx +\frac{\beta}{2}\int_\Omega u_H^2 v_{\widehat H}^2\, dx \nonumber \\
&\leq & \frac{1}{2}\int_\Omega (|\nabla u|^2+|\nabla v|^2)\, dx + \frac{1}{4}\int_\Omega (u^4+v^4)\, dx +\frac{\beta}{2}\int_\Omega u^2 v^2\, dx \nonumber \\
&= & I(u,v)=\min_{\cS} I.
\end{eqnarray}
Thus $(u_H,v_{\widehat H})\in \cS$ and $I(u_H,v_{\widehat H})=\min_{\cS}I$, and in particular $(u_H,v_{\widehat H})$ solves \eqref{eq: CLLL} with 
$$
\lambda=\lambda(u_H,v_{\widehat H})=\int_\Omega (|\nabla u_H|^2+u_H^4 + \beta u_H^2 v_{\widehat H}^2)\, dx,\qquad  \mu=\mu(u_H,v_{\widehat H})=\int_\Omega (|\nabla v_{\widehat H}|^2+v_{\widehat H}^4 + \beta u_H^2 v_{\widehat H}^2)\, dx.
$$
Again from \eqref{eq:1-1} we deduce that actually
$$\int_\Omega u_H^2 v_{\widehat H}^2 =\int_\Omega u^2 v^2\, dx$$
and hence $\lambda(u,v)=\lambda(u_H,v_{\widehat H})$, $\mu(u,v)=\mu(u_H,v_{\widehat H})$. Thus $(u,v)$ and $(u_H,v_{\widehat H})$ solve the same system and Theorem \ref{thm:main_result} applies.
\end{proof}

\noindent {\bf Acknowledgments.}
H. Tavares was supported by Funda\c c\~ao para a Ci\^encia e a Tecnologia, grant SFRH/BPD/69314/2010  and PEst OE/MAT/UI0209/2011.\\
This paper started during a visit of H. Tavares to the University of Frankfurt. His stay was also partially supported by a grant of the
Justus-Liebig-University, Giessen\\
Moreover, the authors would like to thank S. Terracini for useful discussions around this subject.

\noindent \verb"htavares@ptmat.fc.ul.pt"\\
University of Lisbon, CMAF, Faculty of Science, Av. Prof. Gama Pinto
2, 1649-003 Lisboa, Portugal

\noindent \verb"weth@math.uni-frankfurt.de"\\
Institut f\"ur Mathematik, Goethe-Universit\"at Frankfurt, Robert-Mayer-Str. 10,
D-60054 Frankfurt a.M., Germany

\end{document}